\numberwithin{equation}{section}
\newcommand{\prim}{\operatorname{prim}}
\newcommand{\NN}{\mathbb{N}}
\newcommand{\ZZ}{\mathbb{Z}}
\newcommand{\RR}{\mathbb{R}}
\newcommand{\slope}{\operatorname{slope}}
\newcommand{\SL}{\operatorname{SL}}
\newcommand{\FQ}{\mathsf{FQ}}
\newcommand{\slab}{\operatorname{slab}}
\newcommand{\BCZ}{\operatorname{BCZ}}
\newtheorem{proposition}{Proposition}[section]
\newtheorem{theorem}{Theorem}[section]
\theoremstyle{remark}
\newtheorem{remark}{Remark}[section]
\theoremstyle{definition}
\newtheorem{definition}{Definition}[section]
\title{On Cross Sections to the Geodesic and Horocycle Flows on Quotients of $\SL(2, \RR)$ by Hecke Triangle Groups $G_q$}
\author{Diaaeldin Taha \\ University of Washintgon in Seattle, \href{mailto:dtaha@uw.edu}{dtaha@uw.edu}}
\begin{document}
\maketitle

\abstract{In this paper, we provide a model for cross sections to the geodesic and horocycle flows on $\SL(2, \RR)/G_q$ using an extension of a heuristic of P. Arnoux and A. Nogueira. Our starting point is a continued fraction algorithm related to the group $G_q$, and a cross section to the horocycle flow on $\SL(2, \RR)/G_q$ from a previous paper. As an application, we get the natural extension and invariant measure for a symmetric $G_q$-Farey interval map resulting from projectivizing the aforementioned continued fraction algorithm.}

\section{Introduction}

For each integer $q \geq 3$, the \emph{Hecke triangle group $G_q$} is the discrete subgroup of $\SL(2, \RR)$ generated by
\begin{equation}
S := \begin{pmatrix}0 & -1 \\ 1 & 0\end{pmatrix}, \text{ and } T_q := \begin{pmatrix}1 & \lambda_q \\ 0 & 1\end{pmatrix},
\end{equation}
where $\lambda_q := 2 \cos\frac{\pi}{q}$. The modular group $\SL(2, \ZZ)$ is the Hecke triangle group corresponding to $q = 3$. As such, the family of Hecke triangle groups provides a natural test ground for understanding how several classical constructs (e.g. the Farey sequence, Stern-Brocot tree, and so on) can be extended to other Fuchsian groups. One such set of objects that is of interest is the family of the discrete orbits
\begin{equation}
\Lambda_q := G_q (1, 0)^T
\end{equation}
of the linear action of $G_q$ on the plane $\RR^2$. (For $q = 3$, the orbit $\Lambda_3$ is the set of primitive pairs of integers $\ZZ_{\prim}^2 := \{(a, b) \in \ZZ^2 \mid \gcd(a, b) = 1\}$. Those are exactly the non-zero points of $\ZZ^2$ that are visible from the origin.)

In \cite{dia}, we derived an analogue of the classical Stern-Brocot process for the set $\Lambda_q$ of $G_q$-``visible lattice points'' starting with a particular continued fraction algorithm that we refer to, following Janvresse, Rittaud, and De La Rue in \cite{Janvresse2011-fl}, as the \emph{$\lambda_q$-continued fraction algorithm}. Our sought for application for the $G_q$-Stern-Brocot tree in \cite{dia} was deriving an explicit cross section to the horocycle flow on $\SL(2, \RR)/G_q$. In this paper, we study the aforementioned continued fraction algorithm itself, with our focus being the derivation of an explicit cross section to the geodesic flow on $\SL(2, \RR)/G_q$ whose first return map is a natural extension to (a projective version of) the $\lambda_q$-continued fraction algoritm.

Our main tool is an extension of a heuristic introduced by P. Arnoux and A. Nogueira in \cite{Arnoux1993-xh} for deriving geometric models of natural extensions of multidimensional continued fraction algorithms. Our extension of the said heuristic provides a simple picture of the cross section to the horocycle flow $h_\cdot \curvearrowright \SL(2, \RR)/G_q$ we derived in \cite{dia} (\cref{theorem: G_q BCZ maps}), along with the cross section to the geodesic flow $g_\cdot \curvearrowright \SL(2, \RR)/G_q$ we derive here (\cref{proposition: polygon P is a cross section to the horocycle flow}). As a by product, we get an infinite invariant measure for the (projective) $\lambda_q$-continued fraction algorithm. We accelerate the continued fraction algorithm to get a map with a finite invariant measure, and a corresponding finite area geodesic cross section.

In the remainder of this section, we recall the $\lambda_q$-continued fraction algorithm, present our extension of the Arnoux-Nogueira heuristic, and present our main results.

\subsection{The $\lambda_q$-continued fraction algorithm}

As is customary when working with Hecke triangle groups, we write
\begin{equation}
U_q := T_qS = \begin{pmatrix}\lambda_q & -1 \\ 1 & 0\end{pmatrix}.
\end{equation}
For $i = 0, 1, \cdots, q - 2$, we consider the following vectors
\begin{equation}
\mathfrak{w}_i^q = (x_i^q, y_i^q)^T := U_q^i (1, 0)^T
\end{equation}
in $\Lambda_q$. (Note that $\mathfrak{w}_0^q = (1, 0)^T$, $\mathfrak{w}_1^q = (\lambda_q, 1)^T$, $\mathfrak{w}_{q-2}^q = (1, \lambda_q)^T$, and $\mathfrak{w}_{q-1}^q = (0, 1)^T$.) For $i = 0, 1, \cdots, q - 2$, denote by
\begin{equation}
\Sigma_i^q := (0, \infty) \mathfrak{w}_i^q + [0, \infty)\mathfrak{w}_{i+1}^q
\end{equation}
the sector in the first quadrant between $\mathfrak{w}_i^q$ (inclusive), and $\mathfrak{w}_{i+1}^q$ (exclusive).

\begin{figure}
\centering
\begin{tikzpicture}[scale=2]
\draw[->,ultra thin] (-0.5,0)--(2.5,0) node[right]{$\scriptstyle x$};
\draw[->,ultra thin] (0,-0.5)--(0,2.2) node[above]{$\scriptstyle y$};
\fill (1,0) circle[radius=0.5pt];
\fill (1.618,1) circle[radius=0.5pt];
\fill (1.618,1.618) circle[radius=0.5pt];
\fill (1,1.618) circle[radius=0.5pt];
\fill (0,1) circle[radius=0.5pt];
\draw (0, 0) -- (2, 2/1.618);
\draw (0, 0) -- (2, 2);
\draw (0, 0) -- (2/1.618, 2);
\node at (1+0.2,0-0.2) {$\scriptstyle \mathfrak{w}_0^5$};
\node at (1.618+0.3,1-0.1) {$\scriptstyle \mathfrak{w}_1^5$};
\node at (1.618+0.3,1.618) {$\scriptstyle \mathfrak{w}_2^5$};
\node at (1-0.3,1.618+0.1) {$\scriptstyle \mathfrak{w}_3^5$};
\node at (0-0.2,1+0.2) {$\scriptstyle \mathfrak{w}_4^5$};

\node at (2.3, 0.5) {$\scriptstyle \Sigma_0^5$};
\node at (2.3, 1.7) {$\scriptstyle \Sigma_1^5$};
\node at (1.7, 2.3) {$\scriptstyle \Sigma_2^5$};
\node at (0.5, 2.3) {$\scriptstyle \Sigma_3^5$};
\end{tikzpicture}
\caption{The vectors $\left\{\mathfrak{w}_i^5\right\}_{i=0}^4$, and the sectors $\left\{\Sigma_i^5\right\}_{i=0}^3$.}
\end{figure}
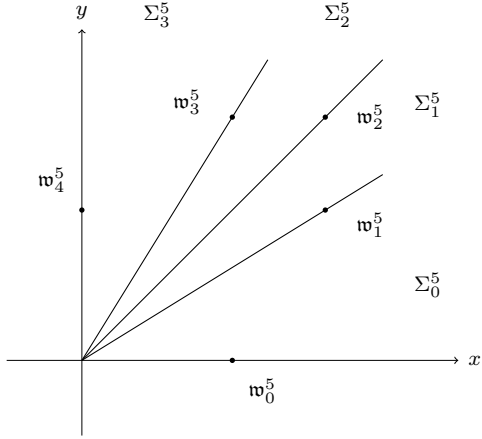

\begin{definition}[\cite{Davis2018-al, Janvresse2011-fl, dia}]
\label{definition: lambda continued fraction}
For any non-zero vector $\mathbf{u} \in \RR^2$, an application of the \emph{$\lambda_q$-continued fraction algorithm} is the following: If $\mathbf{u}$ is in the sector $\Sigma_i^q$ with $0 \leq i \leq q - 2$, then replace $\mathbf{u}$ with $(M_i^q)^{-1} \mathbf{u} \in \cup_{j=0}^{q-2} \Sigma_j^q$, where
\begin{equation}
M_i^q := U_q^{i}T_q = [\mathfrak{w}_i^q\ \mathfrak{w}_{i+1}^q]
\end{equation}
for all $i = 0, 1, \cdots, q - 2$ are the matrices in $G_q$ whose columns are $\mathfrak{w}_i^q$ and $\mathfrak{w}_{i+1}^q$.
\end{definition}

(Note that for every $i = 0, 1, \cdots, q - 2$, the matrix $M_i^q$ bijectively maps $\cup_{j=0}^{q-2} \Sigma_j^q$ to $\Sigma_i^q$.) In \cite{dia}, we showed that the vector $\mathbf{u}$ eventually lands on the line $y = 0$ and is fixed by the algorithm if and only if $\mathbf{u}$ is parallel to a vector in $\Lambda_q$.

\subsection{The Arnoux-Nogueira heuristic}

We below describe a geometric model that can be used to describe cross sections to the horocycle and geodesic flows on $\SL(2, \RR)/G_q$. This model is based on a heuristic introduced by P. Arnoux and A. Nogueira \cite{Arnoux1993-xh} for deriving geometric models of the natural extensions, and invariant measures of multidimensional continued fractions. We extend the heuristic to provide, in our setting, a unified description of cross sections to the horocycle flow in addition to the geodesic flow on $\SL(2, \RR)/G_q$. An excellent presentation of the applications of this heuristic for finding cross sections to the geodesic flow can be found in \cite{Arnoux2018-dg}, or the more classical \cite{Zorich1996-hj}.

Towards that end, we write
\begin{equation}
\FQ := \{(x, y)^T \in \RR^2 \mid x > 0, y \geq 0\}
\end{equation}
for the collection of vectors in the first quadrant, and
\begin{equation}
\widehat{\FQ \times \FQ} := \{(\mathbf{u}, \mathbf{v}) \in \FQ \times \FQ \mid \mathbf{u} \cdot \mathbf{v} = 1\}
\end{equation}
for the collection of pairs of vectors in $\FQ \times \FQ$ with dot product $1$. We have the following elementary linear-algebraic properties for the horocycle and geodesic flows on $\widehat{\FQ \times \FQ}$. (The matrices $h_s, g_{a, b}, g_t$ are as in \cref{equation: h_s}, \cref{equation: gab}, and \cref{equation: g_t}.)

\begin{proposition}
\label{proposition: odds and ends}
Let $\Phi : \widehat{\FQ \times \FQ} \to \SL(2, \RR)$ be the map defined by
\begin{equation}
\Phi\left(\begin{pmatrix}a \\ b\end{pmatrix}, \begin{pmatrix}c \\ d\end{pmatrix}\right) := \begin{pmatrix}a & b \\ -d & c\end{pmatrix}.
\end{equation}
The map $\Phi$ is well-defined, and injective. The following are also true.
\begin{enumerate}
\item For any pair $(\mathbf{u}, \mathbf{v}) \in \widehat{\FQ \times \FQ}$, and matrix $A \in M_{2 \times 2}(\RR)$ with $A^{-1}\mathbf{u}, A^T\mathbf{v} \in \FQ$, we have that $(A^{-1}\mathbf{u}, A^T\mathbf{v}) \in \widehat{\FQ \times \FQ}$, and
\begin{equation}
\Phi(A^{-1}\mathbf{u}, A^T \mathbf{v}) = \Phi(\mathbf{u}, \mathbf{v})(A^{-1})^T.
\end{equation}
\item For any $((a, b)^T, (c, d)^T) \in \widehat{\FQ \times \FQ}$, there exists $s \in [0, \frac{1}{ab})$ such that
\begin{equation*}
\begin{pmatrix}c \\ d\end{pmatrix} = (1 - abs) \begin{pmatrix}1/a \\ 0\end{pmatrix} + abs \begin{pmatrix}0 \\ 1/b\end{pmatrix}.
\end{equation*}
In that case,
\begin{equation}
\Phi((a, b)^T, (c, d)^T) = h_s \Phi((a, b)^T, (1/a, 0)^T) = h_s g_{a, b}.
\end{equation}
\item For any $t \in \RR$, and $(\mathbf{u}, \mathbf{v}) \in \widehat{\FQ \times \FQ}$, we have that
\begin{equation}
g_t\Phi(\mathbf{u}, \mathbf{v}) = \Phi(e^t \mathbf{u}, e^{-t} \mathbf{v}).
\end{equation}
\end{enumerate}
Moreover, the set $\widehat{\FQ \times \FQ}$ can be parametrized by the set
\begin{equation}
\left\{\left((a, b), s\right) \in \FQ \times \RR \mid s \in \left[0, \frac{1}{ab}\right)\right\}.
\end{equation}
\end{proposition}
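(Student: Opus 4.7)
The plan is to verify each assertion by direct computation with the explicit form of $\Phi$, since the content is essentially $2\times 2$ linear-algebraic bookkeeping. Throughout, write $\mathbf{u} = (a,b)^T$ and $\mathbf{v} = (c,d)^T$, so $\Phi(\mathbf{u},\mathbf{v}) = \begin{pmatrix}a & b \\ -d & c\end{pmatrix}$ has determinant $ac + bd = \mathbf{u}\cdot\mathbf{v} = 1$; this gives well-definedness, and injectivity is immediate since all of $a,b,c,d$ are read off the matrix entries.

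For part (1), the dot product is preserved by the transpose computation $(A^{-1}\mathbf{u})^T(A^T\mathbf{v}) = \mathbf{u}^T(A^{-1})^T A^T \mathbf{v} = \mathbf{u}^T\mathbf{v} = 1$, with positivity coming from the hypothesis. For the matrix identity, I would observe that $\Phi(\mathbf{u},\mathbf{v})$ has rows $\mathbf{u}^T$ and $(J\mathbf{v})^T$ where $J = \begin{pmatrix}0 & -1 \\ 1 & 0\end{pmatrix}$. Right-multiplication by $(A^{-1})^T$ sends the first row to $(A^{-1}\mathbf{u})^T$ as desired, and the second to $(J\mathbf{v})^T(A^{-1})^T$; this matches $(JA^T\mathbf{v})^T$ exactly when $AJ = J(A^{-1})^T$, a short consequence of $\det A = 1$ (the natural assumption, since $A$ will be taken from $G_q \subset \SL(2,\RR)$ in subsequent applications).

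Part (2) is a short solve: from $ac + bd = 1$ with $a,c > 0$ and $b,d \geq 0$, the unique $s$ realizing the stated convex combination is $s = d/a$, and the upper bound $s < 1/(ab)$ reduces to $bd < 1$, which follows from $ac > 0$ (with the convention $1/(ab) = +\infty$ when $b = 0$). Substituting back yields $\Phi((a,b)^T,(c,d)^T) = \begin{pmatrix}a & b \\ -as & 1/a - sb\end{pmatrix}$, and this factors as $h_s g_{a,b}$ once one identifies $g_{a,b} = \Phi((a,b)^T,(1/a,0)^T) = \begin{pmatrix}a & b \\ 0 & 1/a\end{pmatrix}$ and reads off the corresponding form of $h_s$. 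Part (3) is immediate: left-multiplication by $g_t = \operatorname{diag}(e^t, e^{-t})$ scales the two rows of $\Phi(\mathbf{u},\mathbf{v})$ by $e^t$ and $e^{-t}$, which coincides with replacing $(\mathbf{u},\mathbf{v})$ by $(e^t\mathbf{u}, e^{-t}\mathbf{v})$ in the definition of $\Phi$. The concluding parametrization is the observation from part (2) that $((a,b),s) \mapsto ((a,b)^T, ((1-abs)/a, as)^T)$ inverts the assignment $(\mathbf{u},\mathbf{v}) \mapsto ((a,b), d/a)$.

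The only step that requires any genuine thought is the symplectic-type identity $AJ = J(A^{-1})^T$ for $A \in \SL(2,\RR)$ underlying part (1); everything else is routine matrix multiplication.
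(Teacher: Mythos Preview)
Your verification is correct. The paper does not supply a proof of this proposition at all---it is stated as a collection of ``elementary linear-algebraic properties'' and left to the reader---so there is no argument in the paper to compare against; your direct computation is exactly the sort of check the authors are implicitly deferring.

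One point worth flagging explicitly (you already do, parenthetically): the identity in part~(1) genuinely requires $\det A = 1$, since it rests on the symplectic relation $A^{-1}J = JA^{T}$ (equivalently $AJ = J(A^{-1})^{T}$), which fails for general invertible $A$. The paper's hypothesis ``$A \in M_{2\times 2}(\RR)$'' is thus slightly loose; in every application the matrix is drawn from $G_q \subset \SL(2,\RR)$, so no harm is done, but your observation that $\det A = 1$ is the operative assumption is the right reading.
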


We apply this heuristic in our setting as follows: The $\lambda_q$-continued fraction from \cref{definition: lambda continued fraction} can be written as a map $\mathbf{A} : \FQ \to \FQ$, where each $\mathbf{u} \in \FQ$ is sent to $A(\mathbf{u})^{-1} \mathbf{u}$, with $A(\mathbf{u}) = M_i^q$ if $\mathbf{u}$ belongs to the sector $\Sigma_i^q$. This extends to a map $\widehat{\mathbf{A}} : \widehat{\FQ \times \FQ} \to \widehat{\FQ \times \FQ}$ that sends each $(\mathbf{u}, \mathbf{v}) \in \widehat{\FQ \times \FQ}$ to $(A(\mathbf{u})^{-1}\mathbf{u}, A(\mathbf{u})^T \mathbf{v})$. In \cref{proposition: polygon P is a cross section to the horocycle flow}, we find a nice fundamental domain in $\widehat{\FQ \times \FQ}$ for the orbits $\widehat{\FQ \times \FQ}/\widehat{\mathbf{A}}$. That is, we find a region in $\widehat{\FQ \times \FQ}$ that contains a unique representative of each $\widehat{\mathbf{A}}$-orbit. Because of the particulars of the $\lambda_q$-continued fraction algorithm, the aforementioned fundamental domain turns out to be identifiable with the quotient $\SL(2, \RR)/G_q$ (up to a set of measure zero). Finally, we describe the return maps of the horocycle and geodesic flows to the boundary of the said domain, giving cross sections to the said flow.

\subsection{Main results}
\label{subsection: main results}

We below overview the main results of this paper. To achieve our goal of presenting a useful, unified description of explicit cross sections to the geodesic and horocycle flows on $\SL(2, \RR)/G_q$, and providing invariant measures for the first return maps to the geodesic cross section, we go through the following three steps.
\begin{enumerate}
\item We start with the suspension $S_{R_q}\mathcal{T}^q$ of the horocycle flow on $\SL(2, \RR)/G_q$ over the $G_q$-Farey triangle $\mathcal{T}^q$ from \cref{theorem: G_q BCZ maps}, and reparametrize it into another suspension $S\mathcal{P}^q$ over what we call the \emph{$G_q$-Stern-Brocot polygon} $\mathcal{P}^q$. The rationale behind this reparametrization is that it provides a simple way to work with the geodesic flow and its coding. It is possible to work directly with $S_{R_q}\mathcal{T}^q$, but that obfuscates the coding, and is mechanically more demanding than going through the reparametrization, and then working with $S\mathcal{P}^q$.
\item Parametrise the sides of $S\mathcal{P}^q$. This gives an explicit cross section to the geodesic flow using a planar region with infinite area. As a by product, we get a natural extension to the $\lambda_q$-continued fraction algorithm, and an infinite invariant measure for a parametrization of the algorithm that we refer to as the \emph{symmetric $G_q$-Farey map $\mathscr{F}_q$}.
\item We accelerate the map $\mathscr{F}_q$ into a another map $\mathscr{G}_q$ that has finite invariant measure. The corresponding natural extension gives a cross section to the geodesic flow using a planar region with finite area.
\end{enumerate}

\subsubsection{The suspension $S\mathcal{P}^q$ of the horocycle flow $h_\cdot \curvearrowright \SL(2, \RR)/G_q$ over the Stern-Brocot polygon $\mathcal{P}^q$}

The proposition stated below is follows from a previous result on a cross section to the horocycle flow on $\SL(2, \RR)/G_q$ (\cref{theorem: G_q BCZ maps}), and the Arnoux-Nogueira heuristic (\cref{proposition: odds and ends}). Motivated by the triangles $\mathcal{T}^q$ from \cref{theorem: G_q BCZ maps} being referred to as \emph{$G_q$-Farey triangles}, we refer to the polygons $\mathcal{P}^q$ in proposition: polygon P is a cross section to the horocycle flow below as \emph{$G_q$-Stern-Brocot polygons}. (In \cite{dia}, the vertices of the Stern-Brocot polygons are involved in the $G_q$-Stern-Brocot process for $\Lambda_q$.)

\begin{figure}
\label{figure: slabs}
\centering
\begin{tikzpicture}[scale=3]
\draw[->,ultra thin] (0,0)--(2,0) node[right]{$\scriptstyle x$};
\draw[->,ultra thin] (0,0)--(0,2) node[above]{$\scriptstyle y$};
\node at (1+0.1,0-0.15) {$\mathfrak{w}_0^5$};
\node at (1.618+0.15,1) {$\mathfrak{w}_1^5$};
\node at (1.618+0.15,1.618+0.15) {$\mathfrak{w}_2^5$};
\node at (1,1.618+0.15) {$\mathfrak{w}_3^5$};
\node at (0-0.15,1+0.15) {$\mathfrak{w}_4^5$};
\draw[thick] (1, 0) -- (1.618, 1);
\draw[thick] (1.618, 1) -- (1.618, 1.618);
\draw[thick] (1.618, 1.618) -- (1, 1.618);
\draw[thick] (1, 1.618) -- (0, 1);
\draw[dashed] (0, 1) -- (1, 0);
\draw[ultra thin] (0, 1) -- (1, 1);
\draw[ultra thin] (1, 0) -- (1, 1.618);
\draw[ultra thin] (1.618, 1) -- (1, 0.382);
\draw[ultra thin] (1.618, 1.618) -- (1, 0.618);
\node at (1.618, 0.5) {$\slab_4^5$};
\node at (1.618+0.25, 1.310+0.05) {$\slab_3^5$};
\node at (1.310+0.06, 1.618+0.25) {$\slab_2^5$};
\node at (0.4, 1.310+0.2) {$\slab_1^5$};
\draw[fill=white] (1,0) circle[radius=0.5pt];
\fill (1.618,1) circle[radius=0.5pt];
\fill (1.618,1.618) circle[radius=0.5pt];
\fill (1,1.618) circle[radius=0.5pt];
\draw[fill=white] (0,1) circle[radius=0.5pt];
\end{tikzpicture}
\caption{The polygon $\mathcal{P}^5$ from \cref{proposition: polygon P is a cross section to the horocycle flow}, along with the slabs $\left\{\slab_i^5\right\}_{i=1}^4$ in the proof of the aforementioned proposition.}
\end{figure}
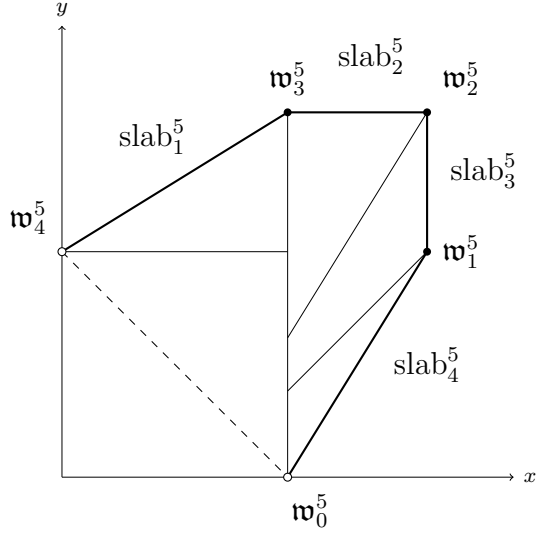

\begin{proposition}
\label{proposition: polygon P is a cross section to the horocycle flow}
Let the polygon $\mathcal{P}^q \subset \FQ$ be the convex hull of the points $\mathfrak{w}_0^q, \cdots, \mathfrak{w}_{q-1}^q$, with the line segment joining $\mathfrak{w}_0^q$ and $\mathfrak{w}_{q - 1}^q$ removed, and denote by
\begin{equation}
S\mathcal{P}^q := \{(\mathbf{u}, \mathbf{v}) \in \widehat{\FQ \times \FQ} \mid \mathbf{u} \in \mathcal{P}^q\}
\end{equation}
the portion of $\widehat{\FQ \times \FQ}$ lying above the polygon $\mathcal{P}^q$. Then the $G_q$-cosets corresponding to the matrices in $\Phi(\widehat{\FQ \times \FQ})$ can be bijectively identified with the points in the suspension $S_{R_q}\mathcal{T}^q$ of the horocycle flow $h_\cdot \curvearrowright \SL(2, \RR)/G_q$ over the Farey triangle $\mathcal{T}^q$. Consequently, the base $\mathcal{P}^q$ of $S\mathcal{P}^q$ is a cross section to the horocycle flow $h_\cdot \curvearrowright \SL(2, \RR)/G_q$, and the open side of $S\mathcal{P}^q$ lying above the line segment joining $\mathfrak{w}_0^q$ and $\mathfrak{w}_{q-1}^q$ is a cross section to the geodesic flow $g_\cdot \curvearrowright \SL(2, \RR)/G_q$.
\end{proposition}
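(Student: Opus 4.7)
The plan is to show that, under $\Phi$, the set $S\mathcal{P}^q$ is a reparametrization of the horocycle suspension $S_{R_q}\mathcal{T}^q$ that is already known from \cref{theorem: G_q BCZ maps} to equal $\SL(2,\RR)/G_q$. Once this identification is in hand, the two cross-section statements fall out of Parts~(2) and~(3) of \cref{proposition: odds and ends}. Namely, Part~(2) exhibits $\widehat{\FQ \times \FQ}$ as a suspension with natural roof $(a,b)\mapsto 1/(ab)$ on which $h_\cdot$ acts by translation of the fiber coordinate $s$, so the base $\mathcal{P}^q$ of $S\mathcal{P}^q$ automatically meets every $h_\cdot$-orbit discretely; and Part~(3) says $g_t$ acts by $\mathbf{u}\mapsto e^t\mathbf{u}$, so any face of $\partial\mathcal{P}^q$ transverse to radial dilation serves as a geodesic cross section. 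The open segment $(\mathfrak{w}_0^q,\mathfrak{w}_{q-1}^q)$ is precisely the unique radially innermost such face.

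To produce the bijection $S\mathcal{P}^q\leftrightarrow S_{R_q}\mathcal{T}^q$, I would partition $\mathcal{P}^q$ into the slabs $\slab_i^q$ drawn in \cref{figure: slabs} and verify that, up to measure zero, each $\slab_i^q$ is the image $M_{j(i)}^q(\mathcal{T}_i^q)$ of a piece $\mathcal{T}_i^q$ of $\mathcal{T}^q$ under some matrix $M_{j(i)}^q\in G_q$ from \cref{definition: lambda continued fraction}, with the $\mathcal{T}_i^q$ themselves tiling $\mathcal{T}^q$. The group $G_q$ is closed under the involution $M\mapsto M^{-T}$ (from $S^{-1}T_q S = T_q^{-T}$ together with $S^T = S^3 \in G_q$), so Part~(1) of \cref{proposition: odds and ends} translates each such transport into right multiplication of $\Phi(\mathbf{u},\mathbf{v})$ by an element of $G_q$; consequently the slab pieces of $S\mathcal{P}^q$ sit over the same right $G_q$-cosets as the corresponding fibered pieces of $S_{R_q}\mathcal{T}^q$. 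It then remains to check that the natural $1/(ab)$ fiber heights, summed over the slabs sitting above a common point of $\mathcal{T}^q$, reassemble into $R_q$; this yields the desired identification $S\mathcal{P}^q\cong S_{R_q}\mathcal{T}^q\cong \SL(2,\RR)/G_q$ and settles the horocycle claim.

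With the identification established, the geodesic claim reduces to a one-step computation. Take $\mathbf{u}$ on the open inner segment $(\mathfrak{w}_0^q,\mathfrak{w}_{q-1}^q)$; by Part~(3) of \cref{proposition: odds and ends}, its forward geodesic trajectory is the outward radial ray $t\mapsto e^t\mathbf{u}$, which enters $\mathcal{P}^q$ and exits at a unique first time $t_0>0$ through some outer edge $[\mathfrak{w}_i^q,\mathfrak{w}_{i+1}^q]$ at a point $c\mathfrak{w}_i^q+(1-c)\mathfrak{w}_{i+1}^q$. From $M_i^q=[\mathfrak{w}_i^q\ \mathfrak{w}_{i+1}^q]$ the element $(M_i^q)^{-1}\in G_q$ sends $\mathfrak{w}_i^q\mapsto\mathfrak{w}_0^q$ and $\mathfrak{w}_{i+1}^q\mapsto\mathfrak{w}_{q-1}^q$, so it carries the exit point back to $c\mathfrak{w}_0^q+(1-c)\mathfrak{w}_{q-1}^q$ on the open inner segment, giving a discrete increasing sequence of returns for every $g_\cdot$-orbit and hence the claimed cross section. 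The main obstacle of the whole argument is entirely in the slab bookkeeping of the previous paragraph --- pinning down each slab as $M_{j(i)}^q\cdot\mathcal{T}_i^q$, checking that the slabs tile $\mathcal{P}^q$ without overlap, and confirming that their fiber heights sum fiberwise to $R_q$; once this planar-geometric reparametrization is carried out carefully, the remainder of the proof is formal.
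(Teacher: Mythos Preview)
Your overall strategy --- cut the suspension into pieces indexed by the $\lambda_q$-continued fraction matrices, transport each piece by a $G_q$ element via Part~(1) of \cref{proposition: odds and ends}, and then read off the horocycle and geodesic cross sections from Parts~(2) and~(3) --- is exactly the paper's approach, and your geodesic paragraph is fine. But the slab bookkeeping as you describe it cannot work, and the confusion is not cosmetic.

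You propose to partition $\mathcal{P}^q$ into two-dimensional slabs, exhibit each as $M_{j(i)}^q(\mathcal{T}_i^q)$ for some pieces $\mathcal{T}_i^q$ that \emph{tile} $\mathcal{T}^q$, and then check that the $1/(ab)$ heights over the slabs sum to $R_q$. This is impossible on area grounds: the matrices $M_j^q\in G_q$ preserve planar area, so a tiling of $\mathcal{T}^q$ would have to map to a region of the same area, whereas $\operatorname{area}(\mathcal{P}^q)\neq\operatorname{area}(\mathcal{T}^q)$ for $q\geq 4$ (for $q=5$ one has $\operatorname{area}(\mathcal{T}^5)=\lambda_5/2$ but $\operatorname{area}(\mathcal{P}^5)=3/2$). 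Your own next sentence --- ``summed over the slabs sitting above a common point of $\mathcal{T}^q$'' --- already contradicts the tiling picture, since if the $\mathcal{T}_i^q$ were disjoint there would be nothing to sum.

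The correct decomposition, and what the paper does, cuts in the \emph{suspension direction}, not the base direction. The slabs are three-dimensional pieces
\[
\slab_i^q=\{h_sg_{a,b}: (a,b)\in\mathcal{T}^q,\ (a,b)\cdot\mathfrak{w}_{i-1}^q>1,\ s\in[R_{q,i-1}(a,b),R_{q,i}(a,b))\},
\]
so their bases in $\mathcal{T}^q$ are the \emph{nested} sets $\bigcup_{j\geq i}\mathcal{T}_j^q$, not a tiling. Right multiplication by $M_{i-1}^q$ (equivalently, applying $(M_{i-1}^q)^T=M_{q-1-i}^q$ to the $\mathbf{u}$-coordinate) sends these overlapping bases to \emph{disjoint} regions tiling $\mathcal{P}^q$, and the $s$-interval $[R_{q,i-1},R_{q,i})$ of length $1/\big((a,b)\cdot\mathfrak{w}_{i-1}^q\big)\big((a,b)\cdot\mathfrak{w}_i^q\big)$ becomes the full fiber $[0,1/(a'b'))$ over the new base point $(a',b')=\big((a,b)\cdot\mathfrak{w}_{i-1}^q,(a,b)\cdot\mathfrak{w}_i^q\big)$. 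That telescoping of the $R_{q,i}$'s is the actual computation behind your ``heights sum to $R_q$'' intuition; once you reorganize the slabs this way the rest of your outline goes through.
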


In light of \cref{proposition: odds and ends}, the suspension $S\mathcal{P}^q$ can be parametrised as
\begin{equation*}
S\mathcal{P}^q = \left\{((a, b), s) \in \FQ \times \RR \mid (a, b) \in \mathcal{P}^q, s \in \left[0, \frac{1}{ab}\right)\right\}.
\end{equation*}
An explicit identification of the top ($s = 1/(ab)$) and bottom ($s=0$) of $S\mathcal{P}^q$ that involves the $\BCZ_q$ map from \cref{theorem: G_q BCZ maps} can be derived from the proof of \cref{proposition: polygon P is a cross section to the horocycle flow}. (We have no use for this identification in the current paper, and we omit it.) In \cref{proposition: side identification}, we parametrize the sides of the suspension $S\mathcal{P}^q$ lying above the sides of the polygon $\mathcal{P}^q$. This provids the sought for explicit cross section to the geodesic flow $g_\cdot \curvearrowright \SL(2, \RR)/G_q$. Moreover, tracking the successive closed sides of $S\mathcal{P}^q$ that the geodesic orbit of a point hits gives a discrete coding of the geodesic flow $g_\cdot \curvearrowright \SL(2, \RR)/G_q$ using $q-1$ symbols.

Finally, it should be noted that the side identification of the suspension $S\mathcal{P}^q$ in \cref{proposition: side identification} involves the matrices $\{M_i^q\}_{i=0}^{q-2}$ defining the $\lambda_q$-continued fraction algorithm, hence explaining its relevance to the current work.

\subsubsection{The symmetric Farey and Gauss interval maps for the Hecke triangle group $G_q$ and their natural extensions}

It is immediate that the itineraries of the $\lambda_q$-continued fraction algorithm only depend on the slope of the given vector. We are thus motivated to projectivize the algorithm, and for that we choose the parametrized line segment $\{(a, 1 - a)^T \in \RR^2 \mid a \in (0, 1]\}$ joining the vertices $\mathfrak{w}_0^q = (1, 0)^T$ and $\mathfrak{w}_{q - 1}^q = (0, 1)^T$ of the $G_q$-Stern-Brocot polygon $\mathcal{P}^q$. As we will see in \cref{theorem: natural extension and invariant measure for Farey map}, this choice produces the \emph{symmetric $G_q$-Farey map} $\mathscr{F}_q : (0, 1] \to (0, 1]$ given for any $a \in I_i^q := \left(\frac{1}{1 + \slope(\mathfrak{w}_{i+1}^q)}, \frac{1}{1 + \slope(\mathfrak{w}_i^q)}\right]$, with $i = 0 , 1, \cdots, q - 2$, by
\begin{equation*}
\mathscr{F}_q(a) := \frac{1}{1 + \slope((M_i^q)^{-1} (a, 1 - a)^T)}.
\end{equation*}
In \cref{theorem: natural extension and invariant measure for Farey map}, we show that the side identification of the suspension $S\mathcal{P}^q$ over the $G_q$-Stern-Brocot polygon $\mathcal{P}^q$ is a natural extension of $\mathscr{F}_q$, giving the infinite $\mathscr{F}_q$-invariant measure with density
\begin{equation*}
d\mu_{\mathscr{F}_q} = \frac{da}{a(1-a)}.
\end{equation*}
In \cref{theorem: natural extension and invariant measure for Gauss map}, we accelerate the symmetric $G_q$-Farey map and its natural extension to get the symmetric $G_q$-Gauss map $\mathscr{G}_q$ and a finite $\mathscr{G}_q$-invariant measure $d\mu_{\mathscr{G}_q}$.

\begin{remark}
\label{remark: resolving ambiguity}
As is usually the case with Farey-like maps, there is an ambiguity when it comes to defining the function at the points corresponding to the vectors in $\Lambda_q$. We resolve this ambiguity in what follows by adding the right end points to the sets $\mathsf{S}$, $\mathsf{V}_i^q$, and $\mathsf{H}_i^q$ for all $i \in \{0, 1, \cdots, q - 2\}$ from \cref{proposition: side identification} when we use them in this section. This corresponds to the intervals $\{I_i^q\}_{i = 0}^{q - 2}$ in \cref{theorem: natural extension and invariant measure for Farey map} being closed from the right. We made this particular choice as it gives nice itineraries for the ``ambiguous'' points, and also because this agrees with the choice we made for the continued fraction algorithm in \cite{dia}.
\end{remark}

\section{Preliminaries, the $G_q$-BCZ map, and the suspension $S_{R_q}\mathcal{T}^q$ of the horocycle flow $h_\cdot \curvearrowright \SL(2, \RR)/G_q$}

We below review some notation, properties of vectors and matrices that are related to the groups $G_q$, and an explicit cross section to the horocycle flow $h_\cdot \curvearrowright \SL(2, \RR)/G_q$ from \cite{dia}.

\subsection{Notation}
\label{subsection: notation}

For any integer $n \geq 1$, we at some points write
\begin{equation}
[n] = \{0, 1, \cdots, n - 1\}
\end{equation}
for convenience.

Given two vectors $\mathbf{u}_0 = (x_0, y_0)^T, \mathbf{u}_1 = (x_1, y_1)^T \in \RR^2$, we denote their \emph{(scalar) wedge product} by
\begin{equation}
\mathbf{u}_0 \wedge \mathbf{u}_1 = x_0 y_1 - x_1 y_0,
\end{equation}
and their \emph{dot product} by
\begin{equation}
\mathbf{u}_0 \cdot \mathbf{u}_1 = x_0 x_1 + y_0 y_1.
\end{equation}

Finally, we write
\begin{equation}
\label{equation: h_s}
h_s := \begin{pmatrix}1 & 0 \\ -s & 1\end{pmatrix},
\end{equation}
for $s \in \RR$,
\begin{equation}
\label{equation: g_t}
g_t := \begin{pmatrix}e^t & 0 \\ 0 & e^{-t}\end{pmatrix},
\end{equation}
for $t \in \RR$, and
\begin{equation}
\label{equation: gab}
g_{a, b} = g_{(a, b)^T} := \begin{pmatrix}a & b \\ 0 & a^{-1}\end{pmatrix},
\end{equation}
for $a > 0$, and $b \in \RR$. The above matrices satisfy the identities $g_{e^t, 0} = g_t$, $h_s h_t = h_{s + t}$, and $h_s g_t = g_t h_{se^{2t}}$. Left multplication on $\SL(2, \RR)/G_q$ by $(h_s)_{s \in \RR}$ (resp. $(g_t)_{t \in \RR}$) corresponds to the horocycle flow (resp. geodesic flow) on $\SL(2, \RR)/G_q$.

\subsection{Some properties of the vectors $\{\mathfrak{w}_i^q\}_{i=0}^{q-1}$ and matrices $\{M_i^q\}_{i=0}^{q-2}$}

We have the following elementary properties from \cite{dia} of the vectors $\left\{\mathfrak{w}_i^q\right\}_{i=0}^{q-1}$ and the matrices $\left\{M_i^q\right\}_{i=0}^{q-2}$ that we use throughout the paper.

\begin{proposition}[\cite{dia}]
The following are true.
\begin{enumerate}
\item The vectors $\{\mathfrak{w}_i^q\}_{i=0}^{q-1}$ lie on the ellipse $Q_q(x, y) := x^2 - \lambda_q x y + y^2 = 1$.
\item For any $i = 0, 1, \cdots, q - 2$, we have the Farey neighbor/unimodularity identitiy
\begin{equation}
\mathfrak{w}_i^q \wedge \mathfrak{w}_{i+1}^q = x_i^q y_{i+1}^q - x_{i+1}^q y_i^q = 1,
\end{equation}
along with
\begin{equation}
\mathfrak{w}_0^q \wedge \mathfrak{w}_{q-1}^q = 1.
\end{equation}
\item The set $\Lambda_q = G_q (1, 0)^T$ is symmetric against the line $y = x$, and so
\begin{equation}
(M_i^q)^T = M_{q-2-i}^q
\end{equation}
for all $i = 0, 1, \cdots, q - 2$.
\end{enumerate}
\end{proposition}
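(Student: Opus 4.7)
The plan is to handle the three items in order, with (1) and (2) being short invariance/determinant observations and (3) requiring a bit more work via a group-theoretic reflection argument.

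For Part~(1), I would note that $U_q$ has determinant $1$ and trace $\lambda_q$, so its characteristic polynomial is $t^2 - \lambda_q t + 1$ with eigenvalues $e^{\pm i\pi/q}$; it is therefore conjugate in $\SL(2,\RR)$ to a rotation, and $Q_q$ is (up to scalar) the unique invariant quadratic form. Equivalently, one can verify $U_q^T A U_q = A$ directly, where $A = \begin{pmatrix}1 & -\lambda_q/2 \\ -\lambda_q/2 & 1\end{pmatrix}$ is the symmetric matrix of $Q_q$. Since $Q_q(\mathfrak{w}_0^q) = Q_q(1,0) = 1$, an induction on $i$ using $\mathfrak{w}_{i+1}^q = U_q\mathfrak{w}_i^q$ then finishes~(1). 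Part~(2) reduces to determinants: $\mathfrak{w}_i^q \wedge \mathfrak{w}_{i+1}^q = \det M_i^q = (\det U_q)^i \det T_q = 1$, and $\mathfrak{w}_0^q \wedge \mathfrak{w}_{q-1}^q = (1,0)\wedge(0,1) = 1$ is immediate.

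For Part~(3), let $R = \begin{pmatrix}0 & 1 \\ 1 & 0\end{pmatrix}$ denote reflection across $y = x$. To show $R\Lambda_q = \Lambda_q$ I would first prove the group-level identity $R G_q R = G_q$ (since $R^2 = I$, a single containment suffices). It is enough to verify that conjugation by $R$ sends the generators into $G_q$: direct computation gives $R S R = -S$ and $R T_q R = T_q^T$, and both land in $G_q$ because the eigenvalue computation in~(1) yields $U_q^q = -I$ (hence $-S = (-I)S \in G_q$), and a direct check shows $S T_q S^{-1} = T_q^{-T}$ (hence $T_q^T \in G_q$). Now $R\Lambda_q = R G_q(1,0)^T = G_q R(1,0)^T = G_q(0,1)^T = \Lambda_q$, where the last equality uses $(0,1)^T = U_q^{q-1}(1,0)^T$.

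For the matrix identity $(M_i^q)^T = M_{q-2-i}^q$, I would combine two ingredients. First, $R U_q R = U_q^{-1}$ (direct) together with $R\mathfrak{w}_0^q = (0,1)^T = \mathfrak{w}_{q-1}^q$ gives $R\mathfrak{w}_i^q = \mathfrak{w}_{q-1-i}^q$, hence the palindromic coordinate identities $x_i^q = y_{q-1-i}^q$ and $y_i^q = x_{q-1-i}^q$. Second, reading off the second component of $\mathfrak{w}_{i+1}^q = U_q\mathfrak{w}_i^q$ gives $y_{i+1}^q = x_i^q$ (extending via $\mathfrak{w}_{-1}^q = U_q^{-1}(1,0)^T = (0,-1)^T$ to handle the boundary case $i=0$). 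Combining the two yields $x_j^q = x_{q-2-j}^q$, after which the entry-by-entry comparison of $(M_i^q)^T$ and $M_{q-2-i}^q$ is routine. The main obstacle I anticipate is the verification $R G_q R = G_q$: because $R \notin \SL(2,\RR)$ this is not automatic, and the two generator identities $-I = U_q^q$ and $T_q^T = S T_q^{-1} S^{-1}$ are the technical heart of the argument; everything else reduces to short direct calculation.
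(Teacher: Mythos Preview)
The paper does not actually prove this proposition: it is quoted verbatim from the earlier reference \cite{dia} with no argument supplied here, so there is no ``paper's own proof'' to compare against. That said, your proof is correct and is essentially the standard way to establish these facts. A couple of minor remarks: in Part~(3) the parenthetical about extending to $\mathfrak{w}_{-1}^q$ is unnecessary, since the recursion $y_{i+1}^q = x_i^q$ is only needed for $i=0,\dots,q-2$, all of which are already in range; and once you have $R\mathfrak{w}_i^q = \mathfrak{w}_{q-1-i}^q$ you can also finish more directly by observing $RM_i^qR = [R\mathfrak{w}_{i+1}^q\ R\mathfrak{w}_i^q] = [\mathfrak{w}_{q-2-i}^q\ \mathfrak{w}_{q-1-i}^q] = M_{q-2-i}^q$ and then noting $RM_i^qR = (M_i^q)^T$ since $R A R$ transposes any $2\times 2$ matrix $A$ along the anti-diagonal while $R M_i^q R$ swaps the columns and rows---actually, the cleanest one-liner is $(M_i^q)^T = (U_q^i T_q)^T = T_q^T (U_q^T)^i$ together with $T_q^T = S T_q^{-1} S^{-1}$ and $U_q^T = S U_q^{-1} S^{-1}$, giving $(M_i^q)^T = S T_q^{-1} U_q^{-i} S^{-1}$, which one then identifies with $M_{q-2-i}^q = U_q^{q-2-i}T_q$ using $U_q^{q} = -I$ and $-I \in G_q$. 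Your entry-by-entry route works equally well.
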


\subsection{The $G_q$-BCZ maps, and cross sections to the horocycle flow $h_\cdot \curvearrowright \SL(2, \RR)/G_q$}

We here present our main result from \cite{dia} on a cross section to the horocycle flow $h_\cdot \curvearrowright \SL(2, \RR)/G_q$. The result essentially says the following.

\begin{itemize}
\item The orbit of the ``$G_q$-lattice'' $\Lambda_q$ under the linear action of $\SL(2, \RR)$ on the plane $\RR^2$ can be identified with $\SL(2, \RR)/G_q$. This is an extension of the classical well-known fact that the space of unidmodular lattices can be identified with the cosets $\SL(2, \RR)/\SL(2, \ZZ)$.
\item The $G_q$-Farey triangle $\mathcal{T}^q$, indentified as a subset of $\SL(2, \RR)/G_q$ via the matrices from \cref{equation: gab}, is a cross section to the horocycle flow $h_\cdot \curvearrowright \SL(2, \RR)/G_q$, with $R_q$ as a roof function. (In \cite{dia}, we show that the suspension $S_{R_q}\mathcal{T}^q$ is $\SL(2, \RR)/G_q$ minus particular closed horocycles, which constitute a null set with respect to the Haar measure on $\SL(2, \RR)/G_q$.)
\end{itemize}

In the proof of \cref{proposition: polygon P is a cross section to the horocycle flow}, we show that the suspension $S_{R_q}\mathcal{T}^q$ can be chopped into ``slabs'' and rearranged as the suspension $S\mathcal{P}^q$ in \cref{proposition: polygon P is a cross section to the horocycle flow}, hence revealing an intimate relationship between the definition of the roof function $R_q$ and the coding of the geodesic flow $g_\cdot \curvearrowright \SL(2, \RR)/G_q$. That, and the above two points, are the reasons we need \cref{theorem: G_q BCZ maps} here.

\begin{figure}
\centering
\begin{tikzpicture}[scale=3]
\draw[->,ultra thin] (-0.5,0)--(1.25,0) node[right]{$\scriptstyle a$};
\draw[->,ultra thin] (0,-0.5)--(0,1.25) node[above]{$\scriptstyle b$};
\draw[dashed] (0, 1) -- (1, -0.618);
\draw[thick] (1, -0.618) -- (1, 1) -- (0, 1);
\draw[ultra thin] (0.382, 0.382) -- (1, 0);
\draw[ultra thin] (0.618, 0) -- (1, -0.382);
\node at (1.2, 0.5) {$\scriptstyle \mathcal{T}_4^5$};
\node at (1.2, -0.191) {$\scriptstyle \mathcal{T}_3^5$};
\node at (1.2, -0.5) {$\scriptstyle \mathcal{T}_2^5$};
\end{tikzpicture}

\caption{The $G_5$-Farey triangle $\mathcal{T}^5$ with the subregions $\mathcal{T}_2^5$, $\mathcal{T}_3^5$, and $\mathcal{T}_4^5$ from \cref{theorem: G_q BCZ maps} indicated.}
\end{figure}
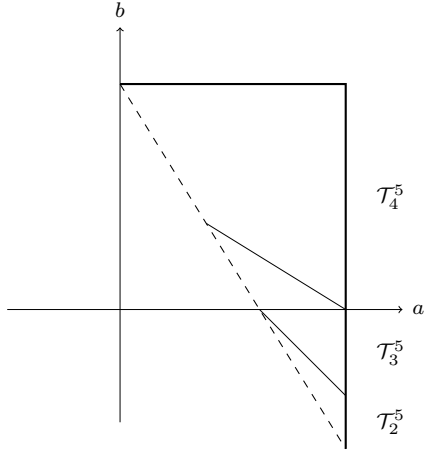

\begin{theorem}[\cite{dia}]
\label{theorem: G_q BCZ maps}
For $\tau > 0$, we write $S_\tau := \{(a, b) \in \RR^2 \mid 0 < a \leq \tau\}$. The following are true.
\begin{enumerate}
\item For any $B \in \SL(2, \RR)$, $B \Lambda_q = \Lambda_q$ if and only if $B \in G_q$. From this follows that the sets $C\Lambda_q$, with $C$ varying over $\SL(2, \RR)$, can be identified with the elements of $\SL(2, \RR)/G_q$.

\item For any $A \in \SL(2, \RR)$, if $A \Lambda_q$ has a horizontal vector of length not exceeding $1$ (i.e. a horizontal vector in $A \Lambda_q \cap S_1$), then $A \Lambda_q$ can be uniquely identified with a point $(a_A, b_A)$ in the \emph{$G_q$-Farey triangle}
\begin{equation}
\mathcal{T}^q = \{(a, b) \in \RR^2 \mid 0 < a \leq 1,\ 1 - \lambda_q a < b \leq 1\}
\end{equation}
through $B\Lambda_q = g_{a_A,b_A}\Lambda_q$. Moreover, the value $a_A$ agrees with the length of the horizontal vector in $A\Lambda_q \cap S_1$.

\item Let $(a, b) \in \mathcal{T}^q$ be any point in the $G_q$-Farey triangle. The set $g_{a, b} \Lambda_q \cap S_1$ has a vector with smallest positive slope. Consequently, there exists a smallest $s = R_q(a, b) > 0$ such that $h_s g_{a,b} \Lambda_q$ has a horizontal vector of length not exceeding $1$, and hence $h_s g_{a, b} \Lambda_q$ corresponds to a unique point $\BCZ_q(a, b) \in \mathcal{T}^q$ in the $G_q$-Farey triangle. The function $R_q : \mathcal{T}^q \to \RR_+$ is referred to as the \emph{$G_q$-roof function}, and the map $\BCZ_q(a, b) : \mathcal{T}^q \to \mathcal{T}^q$ is referred to as the \emph{$G_q$-BCZ map}.

\item The $G_q$-Farey triangle $\mathcal{T}^q$ can be partitioned into the union of
\begin{equation}
\mathcal{T}_i^q := \{(a, b) \in \mathcal{T}^q \mid (a, b)^T \cdot \mathfrak{w}_{i - 1} > 1,\ (a, b)^T\cdot\mathfrak{w}_i \leq 1\},
\end{equation}
with $i = 2, 3, \cdots, q - 1$, such that if $(a, b) \in \mathcal{T}_i^q$, then $g_{a, b}\mathfrak{w}_i^q$ is the vector of least positive slope in $g_{a,b}\Lambda_q \cap S_1$, and
\begin{itemize}
\item the value of the roof function $R_q(a, b)$ is given by
\begin{equation}
R_q(a, b) = R_{q, i}(a, b) := \frac{y_i^q}{a \times (a,b)^T \cdot \mathfrak{w}_i^q}, \text{ and }
\end{equation}
\item the value of the BCZ map $\BCZ_q(a, b)$ is given by
\begin{equation*}
\BCZ_q(a, b) := \left((a,b)^T \cdot \mathfrak{w}_i^q, (a,b)^T \cdot \mathfrak{w}_{i+1}^q + k_i^q(a, b) \times \lambda_q \times (a, b)^T \cdot \mathfrak{w}_i^q\right),
\end{equation*}
where the \emph{$G_q$-index} $k_i^q(a,b)$ is given by
\begin{equation*}
k_i^q(a,b) := \left\lfloor \frac{1 - (a,b)^T \cdot \mathfrak{w}_{i+1}^q}{\lambda_q \times (a,b)^T \cdot \mathfrak{w}_i^q} \right\rfloor.
\end{equation*}
\end{itemize}

\item Let $X_q$ be the homogeneous space $\SL(2, \RR)/G_q$, $\mu_q$ be the probability Haar measure on $X_q$ (i.e. $\mu_q(X_q) = 1$), and $\Omega_q$ be the subset of $X_q$ corresponding to sets $A\Lambda_q$, $A \in \SL(2, \RR)$, with a horizontal vector of length not exceeding $1$. (Note that $\Omega_q$ can be identified with the Farey triangle $\mathcal{T}^q$ via $\left((a, b) \in \mathcal{T}^q\right) \mapsto (g_{a,b}G_q \in \Omega_q)$.) Finally, let $m_q = \frac{2}{\lambda_q}dadb$ be the Lebesgue probability measure on $\mathcal{T}^q$. Then the triple $(\mathcal{T}^q, m_q, \BCZ_q)$, with $\mathcal{T}^q$ identified with $\Omega_q$, is a cross section to $(X_q, \mu_q, h_\cdot)$, with roof function $R_q$.
\end{enumerate}
\end{theorem}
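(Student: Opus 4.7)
The plan is to prove the five parts of the theorem sequentially, with each relying on the previous ones and on the orbit structure of $\Lambda_q$ established in \cite{dia}.

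For Part 1, I would use a standard orbit-stabilizer argument. The inclusion $G_q \subseteq \mathrm{Stab}_{\SL(2,\RR)}(\Lambda_q)$ is immediate. For the reverse, suppose $B\Lambda_q = \Lambda_q$. Since $(1,0)^T \in \Lambda_q$, there exists $g \in G_q$ with $g(1,0)^T = B(1,0)^T$, so $g^{-1}B$ fixes $(1,0)^T$ and therefore is upper-triangular unipotent, of the form $\bigl(\begin{smallmatrix}1 & t \\ 0 & 1\end{smallmatrix}\bigr)$. This element must send $(0,1)^T \in \Lambda_q$ to another element of $\Lambda_q$, namely $(t, 1)^T \in \Lambda_q$. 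The key input is the characterization $\Lambda_q \cap \{y = 1\} = \{(n\lambda_q, 1)^T : n \in \ZZ\}$, which forces $t = n\lambda_q$, so $g^{-1}B = T_q^n \in G_q$ and $B \in G_q$. The coset identification in Part 1 is then immediate.

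For Part 2, the same parabolic analysis gives $\Lambda_q \cap \{y = 0\} = \{\pm(1,0)^T\}$, so a positive-$x$-coordinate horizontal vector in $A\Lambda_q$, if it exists, is unique; call its length $a_A \leq 1$. Writing $(a_A, 0) = A\mathbf{w}$ and picking $g \in G_q$ with $g(1,0)^T = \mathbf{w}$, the product $Ag$ necessarily has the form $g_{a_A, b}$ for some $b$. The residual right-action of $\mathrm{Stab}_{G_q}((1,0)^T) = \langle T_q \rangle$ shifts $b$ by $\lambda_q a_A$ at a time, so exactly one representative lies in $(1 - \lambda_q a_A, 1]$, which is precisely the $b$-range of $\mathcal{T}^q$. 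For Part 3, I would show that the positive slopes of vectors in $g_{a,b}\Lambda_q \cap S_1$ form a discrete subset of $\RR_{>0}$ bounded away from $0$: discreteness is inherited from $\Lambda_q$, and an accumulation at $0$ would force a sequence of distinct vectors in the bounded-width strip $S_1$ to have $y$-coordinates tending to $0$, contradicting discreteness. Hence a minimum positive slope $s = R_q(a,b)$ exists, and $h_s g_{a,b}\Lambda_q$ then has a horizontal vector of length $\leq 1$, uniquely determining $\BCZ_q(a,b) \in \mathcal{T}^q$ via Part 2.

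Part 4 is where the main geometric content lies. I would show that the inequalities $(a,b)^T \cdot \mathfrak{w}_{i-1}^q > 1$ and $(a,b)^T \cdot \mathfrak{w}_i^q \leq 1$ characterize precisely the $(a,b)$ for which $g_{a,b}\mathfrak{w}_i^q$ realizes the minimum positive slope on $g_{a,b}\Lambda_q \cap S_1$, giving the partition $\mathcal{T}^q = \bigsqcup_{i=2}^{q-1}\mathcal{T}_i^q$. The key geometric fact is the Farey-neighbor identity $\mathfrak{w}_i^q \wedge \mathfrak{w}_{i+1}^q = 1$, which by the $G_q$-Stern-Brocot analysis in \cite{dia} implies no vector of $\Lambda_q$ lies in the open triangle with vertices $0, \mathfrak{w}_i^q, \mathfrak{w}_{i+1}^q$; transporting under $g_{a,b}$ rules out any vector of $g_{a,b}\Lambda_q$ with slope strictly between $0$ and the slope of $g_{a,b}\mathfrak{w}_i^q$ inside $S_1$. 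The formula $R_q(a,b) = y_i^q / (a \cdot (a,b)^T \cdot \mathfrak{w}_i^q)$ is then just the slope of $g_{a,b}\mathfrak{w}_i^q$, and the formula for $\BCZ_q(a,b)$ follows by applying $h_{R_q(a,b)}$ and then translating the new second coordinate by the unique integer multiple $k_i^q$ of $\lambda_q a_{\text{new}}$ needed to return to $(1 - \lambda_q a_{\text{new}}, 1]$.

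For Part 5, I would invoke the standard correspondence between cross sections and suspension flows, verifying three conditions: (i) injectivity of the parametrization $\mathcal{T}^q \hookrightarrow X_q$, $(a,b) \mapsto g_{a,b}G_q$ (from Part 2); (ii) that for $\mu_q$-a.e.\ $x \in X_q$ the horocycle orbit through $x$ meets the image of $\mathcal{T}^q$ discretely, with return time $R_q$ and return map $\BCZ_q$, the exceptional null set consisting of those $A\Lambda_q$ with no horizontal vector (closed horocycle orbits); and (iii) that the pullback of the Haar measure $\mu_q$ to $S_{R_q}\mathcal{T}^q$ is proportional to $da\,db\,ds$, with the constant $\frac{2}{\lambda_q}$ fixed by the normalization $\mu_q(X_q) = 1$. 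Invariance of $m_q$ under $\BCZ_q$ is automatic because $\BCZ_q$ is piecewise given by $\SL(2,\RR)$-matrix multiplication. The main obstacle I anticipate is the geometric step in Part 4: pinning down which $\mathfrak{w}_i^q$ realizes the minimum positive slope on each piece requires the full $G_q$-Stern-Brocot machinery of \cite{dia}, and is the arithmetic core of the theorem where the Hecke triangle group structure meaningfully generalizes the classical $q = 3$ BCZ analysis.
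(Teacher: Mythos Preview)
The paper does not prove this theorem. It is stated as \texttt{\textbackslash begin\{theorem\}[\textbackslash cite\{dia\}]}, i.e., it is quoted verbatim from the author's earlier paper \cite{dia} and used here as a black box; the surrounding text says explicitly ``We here present our main result from \cite{dia}'' and the only argumentative content in the present paper around this theorem is the description of how the suspension $S_{R_q}\mathcal{T}^q$ will later be chopped into slabs. So there is no proof in this paper to compare your proposal against.

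That said, your sketch is a reasonable outline of how such a result is established, and it matches in spirit what one expects the argument in \cite{dia} to look like (orbit--stabilizer for Part~1, parabolic normalization modulo $\langle T_q\rangle$ for Part~2, discreteness for Part~3, the Stern--Brocot/Farey-neighbor structure of $\Lambda_q$ for Part~4, and the standard suspension/cross-section formalism for Part~5). One small caution: in Part~3 you identify $R_q(a,b)$ with the minimum positive slope, but note that $h_s$ acts by $(x,y)\mapsto(x,y-sx)$, so the $s$ that makes a given vector horizontal is its slope only because the vector already has positive $x$-coordinate in $S_1$; this is fine but worth making explicit. The genuinely nontrivial step, as you correctly flag, is Part~4: showing that among all of $\Lambda_q$ (not just the $\mathfrak{w}_i^q$) the vector of least positive slope in $g_{a,b}\Lambda_q\cap S_1$ is one of the $g_{a,b}\mathfrak{w}_i^q$ requires the full description of $\Lambda_q$ in the first quadrant from \cite{dia}, and your sketch defers to that reference appropriately.
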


\section{The suspension $S\mathcal{P}^q$ of the horocycle flow $h_\cdot \curvearrowright \SL(2, \RR)/G_q$}

In this section, we prove \cref{proposition: polygon P is a cross section to the horocycle flow}, and give an explicit parametrisation of the side identification of $S\mathcal{P}^q$ in \cref{proposition: side identification}.

\subsection{Proof of \cref{proposition: polygon P is a cross section to the horocycle flow}}

\begin{proof}
Consider the $G_q$-cosets of $h_s g_{a, b}$, with $(a, b)^T$ belonging to the triangle that is the convex hull of the three vectors $\mathfrak{w}_0^q = (1, 0)^T, (1, 1)^T, \mathfrak{w}_{q - 1}^q = (0, 1)^T$ with the line segment between $\mathfrak{w}_0^q$ and $\mathfrak{w}_{q - 1}^q$ removed, and $s \in \left[0, \frac{1}{ab}\right)$. By \cref{proposition: odds and ends}, the cosets in question can be bijectively identified with the portions of the suspensions $S_{R_q}\mathcal{T}^q$ and $S\mathcal{P}^q$ above the aforementioned triangle. (Note that for $S_{R_q}\mathcal{T}^q$, the roof function $R_q$ satisfies $R_q(a, b) = \frac{1}{ab}$ when $(a, b)^T$ belongs to the aforementioned triangle that belongs to $\mathcal{T}_{q-1}^q$.) It thus remains to identify the remainder of the suspensions $S_{R_q}\mathcal{T}^q$ and $S\mathcal{P}^q$.

Let $\mathcal{T}^q{}^\prime$ be the portion of the Farey triangle $\mathcal{T}^q$ outside the triangle from the previous paragraph. We now partition the suspension $S_{R_q}\mathcal{T}^q{}^\prime$ into ``slabs'' as follows: Let
\begin{equation*}
\slab_1^q := \{h_s g_{a, b} \mid (a, b)^T \in \mathcal{T}^q{}^\prime, s \in [R_{q, 0}(a, b), R_{q, 1}(a, b))\},
\end{equation*}
and for $i = 2, \cdots, q - 1$, let
\begin{equation*}
\slab_i^q := \{h_s g_{a, b} \mid (a, b)^T \in \mathcal{T}^q{}^\prime, (a, b)^T \cdot \mathfrak{w}_{i - 1} > 1, s \in [R_{q, i - 1}(a, b), R_{q, i}(a, b)\}.
\end{equation*}
Note that for any $i = 2, 3, \cdots, q - 1$, the collection of points $(a, b)^T \in \mathcal{T}^q{}^\prime$ satisfying $(a, b)^T \cdot \mathfrak{w}_{i - 1} > 1$ is exactly the union $\cup_{j = i}^{q - 1}\mathcal{T}_j^q$, and that for all $(a, b)^T \in \mathcal{T}^q$ with $(a, b)^T \cdot \mathfrak{w}_{i - 1}^q > 1$ (and necessarily $(a, b)^T \cdot \mathfrak{w}_i^q > 0$) that
\begin{eqnarray*}
R_{q, i}(a, b) - R_{q, i - 1}(a, b) &=& \frac{y_i^q}{a (x_i^q a + y_i^q b)} - \frac{y_{i-1}^q}{a(x_{i-1}^q a + y_{i - 1}^q b)} \\
 &=& \frac{1}{a} \left(\frac{(x_{i - 1}^q y_i^q - x_i^q y_{i - 1}^q)a}{(x_i^q a + y_{i - 1}^q b)(x_{i - 1}^q a + y_{i - 1}^q b)}\right) \\
 &=& \frac{1}{((a, b)^T \cdot \mathfrak{w}_i)((a, b)^T \cdot \mathfrak{w}_{i - 1}^q)} \\
 &>& 0,
\end{eqnarray*}
where we used the fact that $\mathfrak{w}_{i-1}^q \wedge \mathfrak{w}_i^q = x_{i-1}^q y_i^q - x_i^q y_{i - 1}^q = 1$. For any $(a, b)^T \in \mathcal{T}^q$, we have $\lambda_q a + b > 1$, and so
\begin{equation*}
R_{q, 1}(a, b) - R_{q, 0}(a, b) = \frac{1}{a(\lambda_q a + b)} - 0 > 0.
\end{equation*}
This implies that the slabs $\slab_1^q, \cdots, \slab_{q-1}^q$ form a partition of $S_{R_q}\mathcal{T}^q{}^\prime$. For any $i = 1, 2, \cdots, q - 1$, if $h_s g_{a, b} \in \slab_i^q$, then by \cref{proposition: odds and ends} we have that
\begin{eqnarray*}
h_s g_{a, b} M_{i-1}^q &=& \Phi\left(\begin{pmatrix}a \\ b\end{pmatrix}, \begin{pmatrix}\frac{1}{a} - bs \\ as\end{pmatrix}\right) M_{i-1}^q \\
 &=& \Phi\left((M_{i-1}^q)^T\begin{pmatrix}a \\ b\end{pmatrix}, (M_{i-1}^q)^{-1}\begin{pmatrix}\frac{1}{a} - bs \\ as\end{pmatrix}\right) \\
 &=& \Phi\left(\begin{pmatrix}(a, b)^T \cdot \mathfrak{w}_{i-1}^q \\ (a, b)^T \cdot \mathfrak{w}_i^q\end{pmatrix}, M_{i-1}^{-1}\begin{pmatrix}\frac{1}{a} - bs \\ as\end{pmatrix}\right).
\end{eqnarray*}
It is evident that $\begin{pmatrix}(a, b)^T \cdot \mathfrak{w}_{i-1}^q \\ (a, b)^T \cdot \mathfrak{w}_i^q\end{pmatrix} \in \FQ$. When $s = R_{q, i - 1}(a, b) = \frac{y_{i-1}^q}{a(a, b)^T \cdot \mathfrak{w}_{i-1}^q}$ we have that
\begin{eqnarray*}
(M_{i-1}^q)^{-1} \begin{pmatrix}\frac{1}{a} - bs \\ as\end{pmatrix} &=& \begin{pmatrix}y_i^q & -x_i^q \\ -y_{i-1}^q & x_{i - 1}^q\end{pmatrix} \begin{pmatrix}\frac{x_{i-1}^q}{(a, b)^T \cdot \mathfrak{w}_{i-1}^q} \\ \frac{y_{i-1}^q}{(a, b)^T \cdot \mathfrak{w}_{i-1}^q}\end{pmatrix} \\
 &=& \begin{pmatrix}\frac{1}{(a, b)^T \cdot \mathfrak{w}_{i-1}^q} \\ 0\end{pmatrix},
\end{eqnarray*}
and when $s = R_{q, i}(a, b) = \frac{y_i^q}{a(a, b)^T \cdot \mathfrak{w}_i^q}$ we have that
\begin{eqnarray*}
(M_{i-1}^q){-1} \begin{pmatrix}\frac{1}{a} - bs \\ as\end{pmatrix} &=& \begin{pmatrix}y_i^q & -x_i^q \\ -y_{i-1}^q & x_{i - 1}^q\end{pmatrix} \begin{pmatrix}\frac{x_i^q}{(a, b)^T \cdot \mathfrak{w}_i^q} \\ \frac{y_i^q}{(a, b)^T \cdot \mathfrak{w}_i^q}\end{pmatrix} \\
 &=& \begin{pmatrix}0 \\ \frac{1}{(a, b)^T \cdot \mathfrak{w}_i^q}\end{pmatrix}, 
\end{eqnarray*}
where we used the fact that $\mathfrak{w}_{i-q}^q \wedge \mathfrak{w}_i^q = x_{i-1}^q y_i^q - x_i^q y_{i - 1}^q = 1$. This implies by \cref{proposition: odds and ends} that as $s$ varies in the interval $[R_{q, i - 1}(a, b), R_{q, i}(a, b))$, then $h_s g_{a, b}$ bijectively identifies with the points $(\mathbf{u}, \mathbf{v}) \in S\mathcal{P}^q$ with $\mathbf{u} = ((a, b)^T \cdot \mathfrak{w}_{i-1}^q, (a, b)^T \cdot \mathfrak{w}_i^q)^T$. It thus remains to show that the bases of the slabs $\slab_1^q, \cdots, \slab_{q-1}^a$ are mapped by $(M_0^q)^T, \cdots, (M_{q-2}^q)^T$ into a partition of the remaining part of the polygon $\mathcal{P}^q$.

The base of the slab $\slab_1^q$ is a triangle with vertices $(1, 1 - \lambda_q)^T, (1, 0)^T, (0, 1)^T$ (with the line segment between $(1, 1 - \lambda_q)$ and $(0, 1)^T$ removed), and the vertices in question are mapped by $(M_0^q)^T = \begin{pmatrix}1 & 0 \\ \lambda_q & 1\end{pmatrix}$ to $(1, 1)^T, (1, \lambda_q)^T = \mathfrak{w}_{q-2}^q, (0, 1)^T = \mathfrak{w}_{q-1}^q$. For a fixed $i = 2, 3, \cdots, q - 1$, the base of the slab $\slab_i^q$ is a quadrilateral\footnote{It should be noted that for $\slab_2^q$, the points $B_2^q$ and $(0, 1)^T$ agree, and for $\slab_{q-1}^q$, the points $A_{q-1}^q$ and $(1, 0)^T$ agree. We also go through our computations with the understanding that $0/0=1$.} with vertices $A_i^q = \left(1, \frac{1 - x_{i-1}^q}{y_{i-1}^q}\right), (1, 0)^T, (0, 1)^T, B_i^q = \left(\frac{y_{i-1}^q - 1}{\lambda_q y_{i-1}^q - x_{i-1}^q}, \frac{\lambda_q - x_{i-1}^q}{\lambda_q y_{i-1}^q - x_{i-1}^q}\right)$ (with both the line segment between $(0, 1)^T$ and $B_i^q$, and the line segment between $B_i^q$ and $A_i^q$ removed). (The point $A_i^q$ is the solution of the two equations $a = 1$ and $(a, b)^T \cdot \mathfrak{w}_{i-1}^q = 1$, and $B_i^q$ is the solution of $\lambda_q a + b = 1$ and $(a, b)^T \cdot \mathfrak{w}_{i-1}^q = 1$.) The matrix $(M_{i-1}^q)^T = M_{q - 1 - i}^q$ maps the point $(1, 0)^T$ to $\mathfrak{w}_{q - 1 - i}^T$, the point $(0, 1)^T$ to $\mathfrak{w}_{q - i}^T$, the point $A_i^q$ to
\begin{eqnarray*}
(M_{i-1}^q)^T A_i^q &=& \begin{pmatrix}x_{i-1}^q & y_{i-1}^q \\ x_i^q & y_i^q\end{pmatrix} \begin{pmatrix}1 \\ \frac{1 - x_{i-1}^q}{y_{i-1}^q}\end{pmatrix} \\
 &=& \begin{pmatrix}1 \\ \frac{y_i^q - 1}{y_{i-1}^q}\end{pmatrix},
\end{eqnarray*}
and the point $B_i^q$ to
\begin{eqnarray*}
(M_{i-1}^q)^T B_i^T &=& \begin{pmatrix}x_{i-1}^q & y_{i-1}^q \\ x_i^q & y_i^q\end{pmatrix} \begin{pmatrix}\frac{y_{i-1}^q - 1}{\lambda_q y_{i-1}^q - x_{i - 1}^q} \\ \frac{\lambda_q - x_{i-1}^q}{\lambda_q y_{i-1}^q - x_{i-1}^q}\end{pmatrix} \\
 &=& \begin{pmatrix}1 \\ \frac{\lambda_q y_i^q - x_i^q + x_i^q y_{i-1}^q - x_{i-1}^q y_i^q}{\lambda_q y_{i-1}^q - x_{i-1}^q}\end{pmatrix} \\
 &=& \begin{pmatrix}1 \\ \frac{y_{i-1}^q - 1}{y_{i-2}^q}\end{pmatrix}
\end{eqnarray*}
where we used the facts that $\mathfrak{w}_{i-1}^q \wedge \mathfrak{w}_i^q = x_{i-1}^q y_i^q - x_i^q y_{i-1}^q = 1$, $\begin{pmatrix}x_{i-1}^q \\ y_{i-1}^q\end{pmatrix} = \begin{pmatrix}\lambda_q & -1 \\ 1 & 0\end{pmatrix}^{-1} \begin{pmatrix}x_i^q \\ y_i^q\end{pmatrix}$, and $\begin{pmatrix}x_{i-2}^q \\ y_{i-2}^q\end{pmatrix} = \begin{pmatrix}\lambda_q & -1 \\ 1 & 0\end{pmatrix}^{-1} \begin{pmatrix}x_{i-1}^q \\ y_{i-1}^q\end{pmatrix}$. For $i = 3, \cdots, q - 1$, we have that $M_{i-1}^T B_i^q = M_{i-2}^T A_{i-1}^q$ is a point on the line $a = 1$. Moreover, $(M_{q-2}^q)^T A_{q-1}^T = (1, 0)^T = \mathfrak{w}_1^q$, and $(M_1^q)^T B_2^q = (1, \lambda_q)^T = \mathfrak{w}_{q-2}^q$. This proves that bases of $\{\slab_i^q (M_{i-1}^q)^T\}_{i=1}^{q-1}$ partition $\mathcal{P}^q$ as required.

That $\mathcal{P}^q$ forms a cross section to the horocycle flow follows from \cref{proposition: odds and ends} and the fact that $\mathcal{T}^q$ is such a cross section.

That the sides of $S\mathcal{P}^q$ follows from \cref{proposition: odds and ends}, and the fact that any ray in $\FQ$ passing through the origin intersects the sides of $\mathcal{P}^q$. This is expanded on in \cref{proposition: side identification} and its proof.
\end{proof}

\subsection{Side identification of the suspension $S\mathcal{P}^q$}

\begin{proposition}
\label{proposition: side identification}
Consider the set
\begin{equation}
\mathsf{S} := \left\{(a, s) \in \RR^2 \mid a \in (0, 1), s \in \left[0, \frac{1}{a(1-a)}\right)\right\} \bigcup \left\{(1, s) \in \RR^2 \mid s \in [0, \lambda_q)]\right\}
\end{equation}
endowed with the Lebesgue measure $d\mu_\mathsf{S} = dads$, and understood, in light of \cref{proposition: odds and ends}, to be parametrizing the portion of $\widehat{\FQ \times \FQ}$ lying above the line segment joining the vectors $\mathfrak{w}_0^q$ and $\mathfrak{w}_{q-1}^q$ via $(a, s) \mapsto h_s g_{a, 1 - a}$. Similarly, for $i = 0, 1, \cdots, q - 2$, consider the set
\begin{equation}
\mathsf{S}_i^q :=
\begin{cases}
\left\{(\alpha, \sigma) \in \RR^2 \mid \alpha \in (0, 1), s \in \left[0, \frac{1}{(\alpha x_i^q + (1-\alpha)x_{i+1}^q)(\alpha y_i^q + (1 - \alpha)y_{i+1}^q)}\right)\right\}, & i = 0 \\
\left\{(\alpha, \sigma) \in \RR^2 \mid \alpha \in (0, 1], s \in \left[0, \frac{1}{(\alpha x_i^q + (1-\alpha)x_{i+1}^q)(\alpha y_i^q + (1 - \alpha)y_{i+1}^q)}\right)\right\}, & i \neq 0
\end{cases}
\end{equation}
endowed with the Lebesgue measure $d\mu_{\mathsf{S}_i^q} = d\alpha d\sigma$, and understood to be parametrising the portion of $\widehat{\FQ \times \FQ}$ lying above the line segment joining the vectors $\mathfrak{w}_i^q$ and $\mathfrak{w}_{i+1}^q$ via $(\alpha, \sigma) \mapsto h_\sigma g_{\alpha \mathfrak{w}_i^q + (1 - \alpha)\mathfrak{w}_{i+1}^q}$. Futhermore, consider the partitioning of $\mathsf{S}$ into $q - 1$ horizontal strips
\begin{equation}
\mathsf{H}_i^q := \left\{(a, s) \in \mathsf{S} \mid s \in [R_{q,i}(a, 1-a), R_{q, i+1}(a, 1-a))\right\}
\end{equation}
with $i \in [q - 1]$, and $q - 1$ vertical strips
\begin{equation}
\mathsf{V}_i^q :=
\begin{cases}
\left\{(a, s) \in \mathsf{S} \mid \slope((a, 1-a)^T) \in \left(\slope(\mathfrak{w}_i^q), \slope(\mathfrak{w}_{i+1}^q)\right)\right\}, & i = 0 \\
\left\{(a, s) \in \mathsf{S} \mid \slope((a, 1-a)^T) \in \left[\slope(\mathfrak{w}_i^q), \slope(\mathfrak{w}_{i+1}^q)\right)\right\}, & i \neq 0
\end{cases}
\end{equation}
with $i \in [q - 1]$. The following are true.
\begin{enumerate}
\item For any $i \in [q - 1]$, and for any $(a, s) \in \mathsf{V}_i^q$, the smallest $t > 0$ such that --- in light of \cref{proposition: odds and ends} and the above parametrizations --- $g_t h_s g_{a, 1 - a}$ is in $\mathsf{S}_i^q$ is $t = -\log\rho_i^q(a)$, where
\begin{equation}
\rho_i^q(a) = (x_{i+1}^q - y_i^q) a + (x_i^q - x_{i+1}^q).
\end{equation}
The map $\mathsf{V}_i^q \to \mathsf{S}_i^q$ induced by
\begin{equation}
h_s g_{a, 1 - a} \mapsto h_\sigma g_{\alpha \mathfrak{w}_i^q + (1 - \alpha) \mathfrak{w}_{i+1}^q} = g_{-\log \rho_i^q(a)} h_s g_{a, 1 - a}
\end{equation}
is bijective, and its Jacobian determinant is equal to $1$.
\item For any $i \in [q - 1]$, and for any $(\alpha, \sigma) \in \mathsf{S}_i^q$, the map $\mathsf{S}_i^q \to \mathsf{H}_{q - 2 - i}^q$ induced by
\begin{equation}
h_\sigma g_{\alpha \mathfrak{w}_i^q + (1 - \alpha)\mathfrak{w}_{i+1}^q} \mapsto h_s g_{a, 1 - a} = h_\sigma g_{\alpha \mathfrak{w}_i^q + (1 - \alpha)\mathfrak{w}_{i+1}^q} ((M_i^q)^{-1})^T
\end{equation}
is bijective, and its Jacobian determinant is equal to $1$.
\end{enumerate}
\end{proposition}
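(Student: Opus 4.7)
My plan is to handle parts (1) and (2) separately, in each case writing the map explicitly in the $\Phi$-coordinates of \cref{proposition: odds and ends} and reducing each claim to a linear-algebraic computation driven by the wedge-product identity $\mathfrak{w}_i^q \wedge \mathfrak{w}_{i+1}^q = 1$, the recursion $\mathfrak{w}_{i+1}^q = U_q \mathfrak{w}_i^q$ (so $y_{i+1}^q = x_i^q$), and the symmetry $(M_i^q)^T = M_{q-2-i}^q$. Throughout, I will abbreviate $A_i := \alpha x_i^q + (1-\alpha)x_{i+1}^q$ and $B_i := \alpha y_i^q + (1-\alpha)y_{i+1}^q$ for the two coordinates of $\alpha \mathfrak{w}_i^q + (1-\alpha)\mathfrak{w}_{i+1}^q$.

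For part (1), the identity $g_t h_s g_{a,b} = h_{s e^{-2t}} g_{e^t a, e^t b}$ (a matrix restatement of \cref{proposition: odds and ends}(3)) shows that the geodesic flow radially dilates the ``first vector'' $(a, 1-a)^T$ by $e^t$ while sending $s$ to $\sigma = s e^{-2t}$. Since $(a, s) \in \mathsf{V}_i^q$ the slope of $(a, 1-a)^T$ lies between those of $\mathfrak{w}_i^q$ and $\mathfrak{w}_{i+1}^q$, so the positive ray through $(a, 1-a)^T$ meets the line joining $\mathfrak{w}_i^q$ and $\mathfrak{w}_{i+1}^q$ at a unique positive time. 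Writing that line as $(y_{i+1}^q - y_i^q) x + (x_i^q - x_{i+1}^q) y = \mathfrak{w}_i^q \wedge \mathfrak{w}_{i+1}^q = 1$ and substituting $y_{i+1}^q = x_i^q$, evaluation at $e^t(a, 1-a)^T$ collapses to $e^t \rho_i^q(a) = 1$, giving $t = -\log \rho_i^q(a)$. For the Jacobian, $\alpha$ depends only on $a$ and $\sigma$ is linear in $s$ for fixed $a$, so the determinant equals $(\partial \alpha/\partial a)(\partial \sigma/\partial s)$; differentiating $\alpha(x_i^q - x_{i+1}^q) + x_{i+1}^q = e^t a$ with $t'(a) = -(x_{i+1}^q - y_i^q)/\rho_i^q(a)$ and using the linearity of $\rho_i^q$ in $a$ yields $\partial\alpha/\partial a = 1/\rho_i^q(a)^2 = e^{2t}$, which cancels $\partial\sigma/\partial s = e^{-2t}$.

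For part (2), I would apply \cref{proposition: odds and ends}(1) with $A = M_i^q$ to obtain $\Phi(\mathbf{u}', \mathbf{v}')((M_i^q)^{-1})^T = \Phi((M_i^q)^{-1}\mathbf{u}', (M_i^q)^T \mathbf{v}')$. Since $M_i^q$ has columns $\mathfrak{w}_i^q, \mathfrak{w}_{i+1}^q$, one immediately gets $(M_i^q)^{-1}\mathbf{u}' = (\alpha, 1-\alpha)^T$, confirming $a = \alpha$. Writing $\mathbf{v}' = (1/A_i - B_i \sigma, A_i \sigma)^T$ via \cref{proposition: odds and ends}(2) and computing $(M_i^q)^T \mathbf{v}'$, the identity $y_j^q A_i - x_j^q B_i = \alpha \mathfrak{w}_i^q \wedge \mathfrak{w}_j^q + (1-\alpha) \mathfrak{w}_{i+1}^q \wedge \mathfrak{w}_j^q$ specialises via $\mathfrak{w}_i^q \wedge \mathfrak{w}_{i+1}^q = 1$ to $-(1-\alpha)$ at $j = i$ and to $\alpha$ at $j = i+1$, simplifying the result to $(x_i^q/A_i - (1-\alpha)\sigma,\ x_{i+1}^q/A_i + \alpha\sigma)^T$. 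Matching this against the target form $(1/\alpha - (1-\alpha)s, \alpha s)^T$ for points in $\mathsf{S}$ forces $s = \sigma + x_{i+1}^q/(\alpha A_i)$, giving $\partial s/\partial \sigma = 1$ at once. To see the image is exactly $\mathsf{H}_{q-2-i}^q$, I would use $(M_i^q)^T = M_{q-2-i}^q$ (which reads off $\mathfrak{w}_{q-2-i}^q = (x_i^q, x_{i+1}^q)^T$ and $\mathfrak{w}_{q-1-i}^q = (y_i^q, y_{i+1}^q)^T$) and verify $s|_{\sigma = 0} = x_{i+1}^q/(\alpha A_i) = R_{q, q-2-i}(\alpha, 1-\alpha)$ and $s|_{\sigma = 1/(A_i B_i)} = y_{i+1}^q/(\alpha B_i) = R_{q, q-1-i}(\alpha, 1-\alpha)$, the latter amounting to the identity $y_{i+1}^q A_i - x_{i+1}^q B_i = \alpha$ already used above.

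The main obstacle is the Jacobian in part (1): one has to track how $t$, $\alpha$, and $\sigma$ simultaneously depend on $(a, s)$, and the clean cancellation $e^{2t} \cdot e^{-2t} = 1$ only materialises because $\rho_i^q$ is linear in $a$ (itself a consequence of $y_{i+1}^q = x_i^q$). Part (2) is essentially bookkeeping once the two wedge-product identities for $y_j^q A_i - x_j^q B_i$ and the symmetry $(M_i^q)^T = M_{q-2-i}^q$ are deployed in the right order.
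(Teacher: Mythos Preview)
Your proposal is correct and follows essentially the same route as the paper: both arguments find the hitting time by intersecting the ray through $(a,1-a)^T$ with the line through $\mathfrak{w}_i^q$ and $\mathfrak{w}_{i+1}^q$, and both compute the Jacobians by differentiating the explicit coordinate relations (your $e^{2t}\cdot e^{-2t}=1$ cancellation is precisely the paper's $\rho_i^q(a)^{-2}\cdot\rho_i^q(a)^2=1$). Two small points are worth noting. First, your differentiation of $\alpha(x_i^q-x_{i+1}^q)+x_{i+1}^q=e^t a$ to extract $\partial\alpha/\partial a$ is vacuous when $x_i^q=x_{i+1}^q$, which does occur (e.g.\ $q=5$, $i=1$); the paper handles this degenerate case by switching to the $y$-coordinate identity $\alpha y_i^q+(1-\alpha)y_{i+1}^q=(1-a)/\rho_i^q(a)$ instead. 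Second, for the bijectivity onto $\mathsf{H}_{q-2-i}^q$ in part~(2), the paper appeals to the slab decomposition established in the proof of \cref{proposition: polygon P is a cross section to the horocycle flow}, whereas your direct verification of the endpoint values $s|_{\sigma=0}=R_{q,q-2-i}(\alpha,1-\alpha)$ and $s|_{\sigma=1/(A_iB_i)}=R_{q,q-1-i}(\alpha,1-\alpha)$ is a clean self-contained alternative that avoids that back-reference.
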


\begin{proof}
We begin by proving the first numbered claim. The bijectivity of the map $\mathsf{V}_i^q \to \mathsf{S}_i^q$ in question follows from \cref{proposition: odds and ends}, and the fact that a point $((a, b)^T, (1/a, 0)^T)$ on the ``floor'' of $\widehat{\FQ \times \FQ}$ is mapped by the geodesic flow to another point $((e^t a, e^t b)^T, (1/(e^t a), 0)^T)$ on the floor, and a point $((a, b)^T, (0, 1/b)^T)$ on the ``roof'' of $\widehat{\FQ \times \FQ}$ is mapped by the geodesic flow to another point $((e^t a, e^t b)^T, (0, 1/(e^t b))^T)$ on the roof. It thus remains to compute the hit times and the Jacobian determinant of the map $\mathsf{V}_i^q \to \mathsf{S}_i^q$. The equation of the straight line $L_i^q$ going through the points $\mathfrak{w}_i^q$ and $\mathfrak{w}_{i+1}^q$ is $(y_{i+1}^q - y_i^q) a + (-x_{i+1}^q + x_i^q) b = 1$. The line $L_i^q$ and the line going through the origin $(0, 0)^T$ and $(a, 1 - a)^T$ intersect at the point
\begin{equation*}
\left(\frac{a}{(y_{i+1}^q - y_i^q) a + (-x_{i+1}^q + x_i^q)(1-a)}, \frac{1-a}{(y_{i+1}^q - y_i^q) a + (-x_{i+1}^q + x_i^q)(1 - a)}\right)^T = \left(\frac{a}{\rho_i^q(a)}, \frac{1-a}{\rho_i^q(a)}\right)^T,
\end{equation*}
and the hitting time is $t = \log\frac{1}{\rho_i^q(a)}$ as claimed. We have that $\begin{pmatrix}x_{i+1}^q \\ y_{i+1}^q\end{pmatrix} = \begin{pmatrix}\lambda_q & -1 \\ 1 & 0\end{pmatrix} \begin{pmatrix}x_i^q \\ y_i^q\end{pmatrix}$, and so we get the expression for $\rho_i^q(a)$ in the statement. We thus have
\begin{eqnarray*}
h_\sigma g_{\alpha \mathfrak{w}_i^q + (1 - \alpha)\mathfrak{w}_{i+1}^q} &=& g_{-\log\rho_i^q(a)} h_s g_{a, 1 - a} \\
\begin{pmatrix}1 & 0 \\ -\sigma & 1\end{pmatrix} \begin{pmatrix}\alpha x_i^q + (1 - \alpha)x_{i+1}^q & \alpha y_i^q + (1 - \alpha)y_{i+1}^q \\ 0 & \ast \end{pmatrix} &=& \begin{pmatrix}\frac{1}{\rho_i^q(a)} & 0 \\ 0 & \rho_i^q(a)\end{pmatrix} \begin{pmatrix}1 & 0 \\ -s & 1\end{pmatrix} \begin{pmatrix}a & 1 - a \\ 0 & \ast\end{pmatrix} \\
\begin{pmatrix}\alpha x_i^q + (1 - \alpha)x_{i+1}^q & \alpha y_i^q + (1 - \alpha)y_{i+1}^q \\ -\sigma(\alpha x_i^q + (1 - \alpha)x_{i+1}^q) & \ast \end{pmatrix} &=& \begin{pmatrix}\frac{a}{\rho_i^q(a)} & \frac{1 - a}{\rho_i^q(a)} \\ -as\rho_i^q(a) & \ast\end{pmatrix}
\end{eqnarray*}
If $x_i^q - x_{i+1}^q \neq 0$, we use the identity $\alpha x_i^q + (1 - \alpha) x_{i+1}^q = \frac{a}{\rho_i^q(a)}$ to compute $\frac{d\alpha}{da}$ and $\frac{d\alpha}{ds}$. If $x_i^q - x_{i+1}^q = 0$, then $y_i^q - y_{i+1}^q \neq 0$, and we use the identity $\alpha y_i^q + (1 - \alpha)y_{i+1}^q = \frac{1 - a}{\rho_i^q(a)}$ instead to find the aforementioned derivatives. We proceed assuming that $x_i^q - x_{i+1}^q \neq 0$, with the other case being similar. We thus have that
\begin{equation*}
\frac{d\alpha}{da} = \frac{1}{x_i^q - x_{i+1}^q} \frac{\rho_i^q(a) - (\rho_i^q)^\prime(a)a}{\rho_i^q(a)^2} = \frac{1}{\rho_i^q(a)^2},
\end{equation*}
and
\begin{equation*}
\frac{d\alpha}{ds} = 0,
\end{equation*}
and so we have that $\frac{\partial(\alpha, \sigma)}{\partial(a, s)} = \left|\frac{d\alpha}{da}\frac{d\sigma}{ds}\right|$. We also have that
\begin{equation*}
\frac{d\sigma}{ds} = \frac{a\rho_i^q(a)}{\alpha x_i^q + (1 - \alpha)x_{i+1}^q} = \rho_i^q(a)^2.
\end{equation*}
This proves that $\frac{\partial(\alpha, \sigma)}{\partial(a, s)} = 1$ as required.

We now prove the second numbered claim. By \cref{proposition: odds and ends}, and the proof of \cref{proposition: polygon P is a cross section to the horocycle flow}, we get that the map $\mathsf{S}_i^q \to \mathsf{H}_{q - 2 - i}^q$ in question is bijective. The reason is that $\mathsf{S}_i^q$ parametrises the side of $\slab_{q - 1 - i}^q M_{q - 2 - i}^q$ lying above the line segment joining $\mathfrak{w}_i^q$ and $\mathfrak{w}_{i+1}^q$, and $\mathsf{H}_{q-2-i}^q$ parametrises the side of $\slab_{q - 1 - i}^q$ lying above the line segment joining $(1, 0)^T$ and $(0, 1)^T$, and right multiplication by $(M_{q-2-i}^q)^{-1} = ((M_i^q)^{-1})^T$ maps $\mathsf{S}_i^q$ bijectively back to $\mathsf{H}_{q - 2 - i}^q$. It remains to show that its Jacobian determinant is $1$. We have that
\begin{eqnarray*}
h_s g_{a, 1 - a} &=& h_\sigma g_{\alpha \mathfrak{w}_i^q + (1 - \alpha)\mathfrak{w}_{i+1}^q} ((M_i^q)^{-1})^T \\
\begin{pmatrix}a & \ast \\ -as & \ast\end{pmatrix} &=& h_\sigma \begin{pmatrix}\alpha x_i^q + (1 - \alpha)x_{i+1}^q & \alpha y_i^q + (1-\alpha)y_{i+1}^q \\ 0 & \ast\end{pmatrix} \begin{pmatrix}y_{i+1}^q & -y_i^q \\ -x_{i+1}^q & x_i^q\end{pmatrix} \\
 &=& \begin{pmatrix}1 & 0 \\ -\sigma & 1\end{pmatrix} \begin{pmatrix}\alpha & \ast \\ -\frac{x_{i+1}^q}{\alpha x_i^q + (1=\alpha)x_{i+1}^q} & \ast\end{pmatrix} \\
 &=& \begin{pmatrix}\alpha & \ast \\ -\frac{x_{i+1}^q}{\alpha x_i^q + (1=\alpha)x_{i+1}^q} - \sigma \alpha & \ast \end{pmatrix}
\end{eqnarray*}
where we used the fact that $\mathfrak{w}_i^q \wedge \mathfrak{w}_{i+1}^q = x_i^q y_{i+1}^q - x_{i+1}^q y_i^q = 1$. We thus have that $\frac{da}{d\alpha} = 1$, $\frac{da}{d\sigma} = 0$, and $\frac{ds}{d\sigma} = 1$, and so $\frac{\partial(a, s)}{\partial(\alpha, \sigma)} = 1$ as required.
\end{proof}

\section{The symmetric Farey and Gauss interval maps for the Hecke triangle group $G_q$, and their natural extensions}

In this section, we use the identification of the sides of $S\mathcal{P}^q$ from \cref{proposition: side identification} to produce in \cref{theorem: natural extension and invariant measure for Farey map} a model of the natural extension of the Farey map corresponding to our choice of projectivization for the $\lambda_q$-continued fraction algorithm, along with an infinite invariant measure for the Farey map. We also accelerate the Farey map and its natural extension, to get a Gauss map with finite invariant measure in \cref{theorem: natural extension and invariant measure for Gauss map}.

\subsection{The symmetric $G_q$ Farey interval map $\mathscr{F}_q$, and its natural extension}

\begin{figure}
\centering
\includegraphics[scale=0.75]{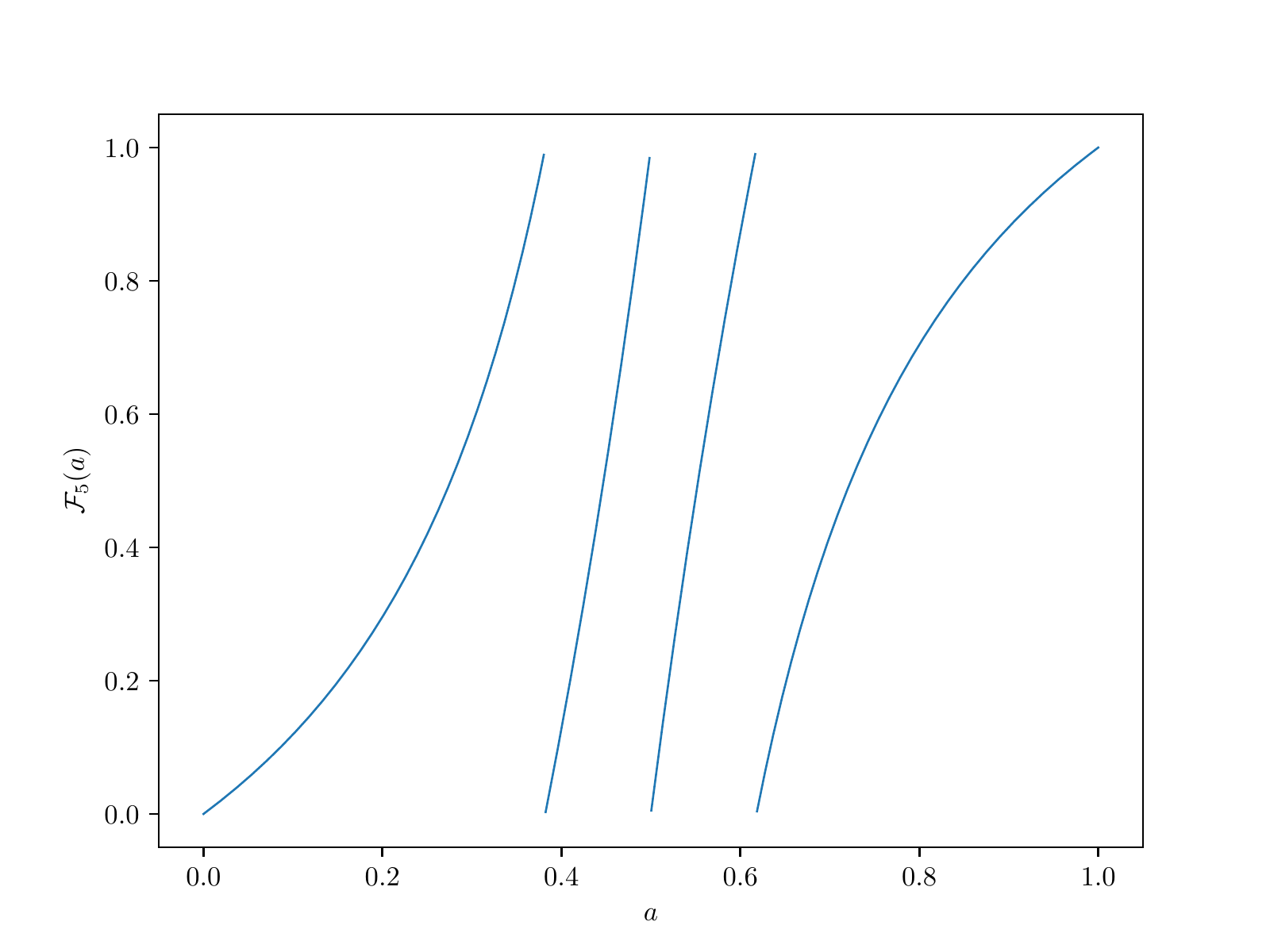}
\caption{The symmetric $G_5$-Farey interval map $\mathscr{F}_5$.}
\end{figure}

\begin{theorem}
\label{theorem: natural extension and invariant measure for Farey map}
Using the notation in \cref{proposition: side identification} and the arbitrary choice we make in \cref{remark: resolving ambiguity}, the map $\widetilde{\mathscr{F}}_q : \mathsf{S} \to \mathsf{S}$ that for any $i \in [q - 1]$ sends any point $(a, s) \in \mathsf{V}_i^q \subset \mathsf{S}$ to the point $(a^\prime, s^\prime)$ given, according to \cref{proposition: odds and ends}, by
\begin{equation}
h_{s^\prime} g_{a^\prime, 1 - a^\prime} = g_{-\log \rho_i^q(a)} h_s g_{a, 1 - a} ((M_i^q)^{-1})^T
\end{equation}
preserves the Lebesgue measure $d\mu_{\widetilde{F}_q} := d\mu_\mathsf{S} = dads$ on $\mathsf{S}$. Moreever, the map $\widetilde{\mathscr{F}}_q$ satisfies the Markov condition $\widetilde{\mathscr{F}}_q(\mathsf{V}_i^q) = \mathsf{H}_{q - 2 - i}^q$ for all $i \in [q - 1]$.

Consequently, $\widetilde{\mathscr{F}}_q$ is a model of the natural extension of the map $\mathscr{F}_q : (0, 1] \to (0, 1]$ defined for all $i \in [q - 1]$ and
\begin{eqnarray}
a \in I_i^q &:=& \left(\frac{1}{1 + \slope(\mathfrak{w}_{i+1}^q)}, \frac{1}{1 + \slope(\mathfrak{w}_i^q)}\right] \\
&=& \left(\frac{x_{i+1}^q}{x_{i+1}^q + y_{i+1}^q}, \frac{x_i^q}{x_i^q + y_i^q}\right]
\end{eqnarray}
by
\begin{eqnarray}
\mathscr{F}_q(a) &:=& \frac{1}{1 + \slope\left((M_i^q)^{-1} (a, 1 - a)^T\right)} \\
 &=& \frac{(x_{i+1}^q + y_{i+1}^q)a - x_{i+1}^q}{(x_{i+1}^q - y_i^q)a + (x_i^q - x_{i+1}^q)}.
\end{eqnarray}
Moreover, the map $\mathscr{F}_q$ preserves the measure
\begin{equation}
d\mu_{\mathscr{F}_q} := \frac{da}{a(1 - a)}
\end{equation}
on $(0, 1]$.
\end{theorem}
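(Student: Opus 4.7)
The plan is to assemble the theorem from the two Jacobian computations already carried out in \cref{proposition: side identification}. The map $\widetilde{\mathscr{F}}_q$ is, by construction, the composition of the two maps treated there: restricted to $\mathsf{V}_i^q$, it is the geodesic-hitting map $\mathsf{V}_i^q \to \mathsf{S}_i^q$ sending $h_s g_{a, 1-a}$ to $g_{-\log \rho_i^q(a)} h_s g_{a, 1-a}$, followed by the side-identification $\mathsf{S}_i^q \to \mathsf{H}_{q-2-i}^q$ given by right multiplication by $((M_i^q)^{-1})^T$. Since both steps have Jacobian determinant $1$, the composition preserves $d\mu_\mathsf{S} = da\,ds$, and the Markov condition $\widetilde{\mathscr{F}}_q(\mathsf{V}_i^q) = \mathsf{H}_{q-2-i}^q$ is exactly the statement that the images fit together as claimed. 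In particular, $\widetilde{\mathscr{F}}_q$ is a bijection of $\mathsf{S}$ (off a measure-zero set) because it is a composition of bijections.

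Next, I would introduce the projection $\pi : \mathsf{S} \to (0,1]$ defined by $\pi(a,s) = a$ and verify $\pi \circ \widetilde{\mathscr{F}}_q = \mathscr{F}_q \circ \pi$. Reading off the $a'$-coordinate of $h_{s'} g_{a', 1-a'} = g_{-\log\rho_i^q(a)} h_s g_{a, 1-a} ((M_i^q)^{-1})^T$ amounts to noting that the first column of this matrix is proportional to $(a', 1-a')^T$, and the first column of $g_{-\log\rho_i^q(a)} h_s g_{a, 1-a}$ is $\frac{1}{\rho_i^q(a)}(a, 1-a)^T$. Since right multiplication by $((M_i^q)^{-1})^T$ transforms the first column by $(M_i^q)^{-1}$ (up to a scalar factor that only rescales), the direction of the new first column is $(M_i^q)^{-1}(a, 1-a)^T$, which is exactly the definition of $\mathscr{F}_q(a)$ after normalising so that the coordinates sum to $1$. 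The explicit formula then comes from plugging in
\begin{equation*}
(M_i^q)^{-1} \begin{pmatrix} a \\ 1-a \end{pmatrix} = \begin{pmatrix} y_{i+1}^q & -x_{i+1}^q \\ -y_i^q & x_i^q \end{pmatrix} \begin{pmatrix} a \\ 1-a \end{pmatrix} = \begin{pmatrix} (x_{i+1}^q + y_{i+1}^q)a - x_{i+1}^q \\ x_i^q - (x_i^q + y_i^q)a \end{pmatrix},
\end{equation*}
and a short simplification using the recursion $\mathfrak{w}_{i+1}^q = U_q \mathfrak{w}_i^q$ (so $y_{i+1}^q = x_i^q$) gives $1 + \slope((M_i^q)^{-1}(a,1-a)^T) = \rho_i^q(a)/\bigl((x_{i+1}^q + y_{i+1}^q)a - x_{i+1}^q\bigr)$, matching the stated formula. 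The endpoint identifications for the partition $\{I_i^q\}$ likewise come from $\mathfrak{w}_i^q, \mathfrak{w}_{i+1}^q$ being the boundary directions of $\Sigma_i^q$, together with the convention from \cref{remark: resolving ambiguity}.

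To finish, the invariant measure $d\mu_{\mathscr{F}_q}$ is obtained by pushing $d\mu_\mathsf{S}$ forward under $\pi$: the fiber $\pi^{-1}(a)$ is the interval $\bigl[0, 1/(a(1-a))\bigr)$ (or $[0, \lambda_q)$ at $a = 1$), so Fubini gives $\pi_*(da\,ds) = \frac{da}{a(1-a)}$, and invariance of $\widetilde{\mathscr{F}}_q$ under $d\mu_\mathsf{S}$ descends to invariance of $\mathscr{F}_q$ under $d\mu_{\mathscr{F}_q}$. Finally, $\widetilde{\mathscr{F}}_q$ is a genuine natural extension (not merely a skew-product extension) because the Markov condition $\widetilde{\mathscr{F}}_q(\mathsf{V}_i^q) = \mathsf{H}_{q-2-i}^q$ shows that iterated preimages of the vertical partition $\{\mathsf{V}_i^q\}$ refine into horizontal strips whose heights shrink to zero, so the $\sigma$-algebra generated by $\{\widetilde{\mathscr{F}}_q^{-n}\pi^{-1}(\mathcal{B})\}_{n \geq 0}$ exhausts the Borel $\sigma$-algebra on $\mathsf{S}$.

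I expect the bookkeeping in the factor-map computation to be the main obstacle: keeping track of scalar normalisations when pulling a direction vector through $g$-, $h$-, and $(M_i^q)^{-1}$-actions, and matching the explicit $\mathscr{F}_q(a)$ formula using only the identities $x_i^q y_{i+1}^q - x_{i+1}^q y_i^q = 1$ and $y_{i+1}^q = x_i^q$. Once the projection identity is verified, the measure-theoretic statements follow mechanically from the unit-Jacobian property of \cref{proposition: side identification}.
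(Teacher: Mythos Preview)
Your approach is essentially identical to the paper's: both assemble $\widetilde{\mathscr{F}}_q$ as the composition of the two unit-Jacobian bijections from \cref{proposition: side identification}, read off the factor map $\mathscr{F}_q$ by tracking the $(a,1-a)$ direction through the $\Phi$-formalism of \cref{proposition: odds and ends}, and obtain $d\mu_{\mathscr{F}_q}$ by integrating $da\,ds$ over the $s$-fibres. One small slip: it is the first \emph{row} of $h_s g_{a,1-a}$ (equivalently the $\mathbf{u}$-component under $\Phi$) that equals $(a,1-a)$, not the first column, and right multiplication by $((M_i^q)^{-1})^T$ acts on that row as $(M_i^q)^{-1}$ on the corresponding column vector---your conclusion is correct once this is untangled, and indeed this is exactly the bookkeeping you flagged as the likely obstacle. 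Your added remark on why $\widetilde{\mathscr{F}}_q$ is a genuine natural extension (via the refining horizontal strips) goes slightly beyond what the paper states explicitly.
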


\begin{proof}
The map $\widetilde{\mathscr{F}}_q : \mathsf{S} \to \mathsf{S}$ is the composition of the maps from the two numbered claims in \cref{proposition: side identification}. This proves that $\widetilde{\mathscr{F}}_q$ preserves $d\mu_{\widetilde{F}_q}$ and satisfies the sought for Markov condition.

For any $i \in [q - 1]$, if $(a, \ast)$ belongs to $\mathsf{V}_i^q$, then $(a, 1 - a)^T$ belongs to the sector $\Sigma_i^q$, and so the slope of $(a, 1 - a)^T$  is in $[\slope(\mathfrak{w}_i^q), \mathfrak{w}_{i+1}^q)$. This corresponds to $a \in I_i^q$ in the claim of the theorem. By \cref{proposition: odds and ends}, the image $(a^\prime, \ast)$ of $(a, \ast)$ under $\widetilde{\mathscr{F}}_q$ is given by
\begin{eqnarray*}
\Phi((a^\prime, 1 - a^\prime)^T, \ast) &=& g_{-\log\rho_i^q(a)} \left(\Phi((a, 1 - a)^T, \ast)\right) ((M_i^q)^{-1})^T \\
 &=& \Phi\left(\frac{1}{\rho_i^q(a)} (M_i^q)^{-1} (a, 1 - a)^T, \ast\right),
\end{eqnarray*}
and so
\begin{eqnarray*}
\frac{1 - a^\prime}{a^\prime} &=& \slope\left(\frac{1}{\rho_i^q(a)} (M_i^q)^{-1} (a, 1 - a)^T\right) \\
 &=& \slope\left(\begin{pmatrix}y_{i+1}^q & -x_{i+1}^q \\ -y_i^q & x_i^q\end{pmatrix} \begin{pmatrix}a \\ 1 - a\end{pmatrix}\right) \\
 &=& \frac{-(x_i^q + y_i^q)a + x_i^q}{(x_{i+1}^q + y_{i+1}^q)a - x_{i+1}^q}.
\end{eqnarray*}
This, along with the fact that $\begin{pmatrix}x_{i+1}^q \\ y_{i+1}^q\end{pmatrix} = \begin{pmatrix}\lambda_q & -1 \\ 1 & 0\end{pmatrix} \begin{pmatrix}x_i^q \\ y_i^q\end{pmatrix}$ give the expression for $\mathscr{F}_q(a)$ in the theorem. Integrating the measure $d\mu_{\widetilde{\mathscr{F}}_q}$ along the $s$ fibers gives $d\mu_{\mathscr{F}_q}$. This proves the claim.
\end{proof}

\subsection{The symmetric $G_q$ Gauss map $\mathscr{G}_q$, and its natural extension}

\begin{figure}
\centering
\includegraphics[scale=0.75]{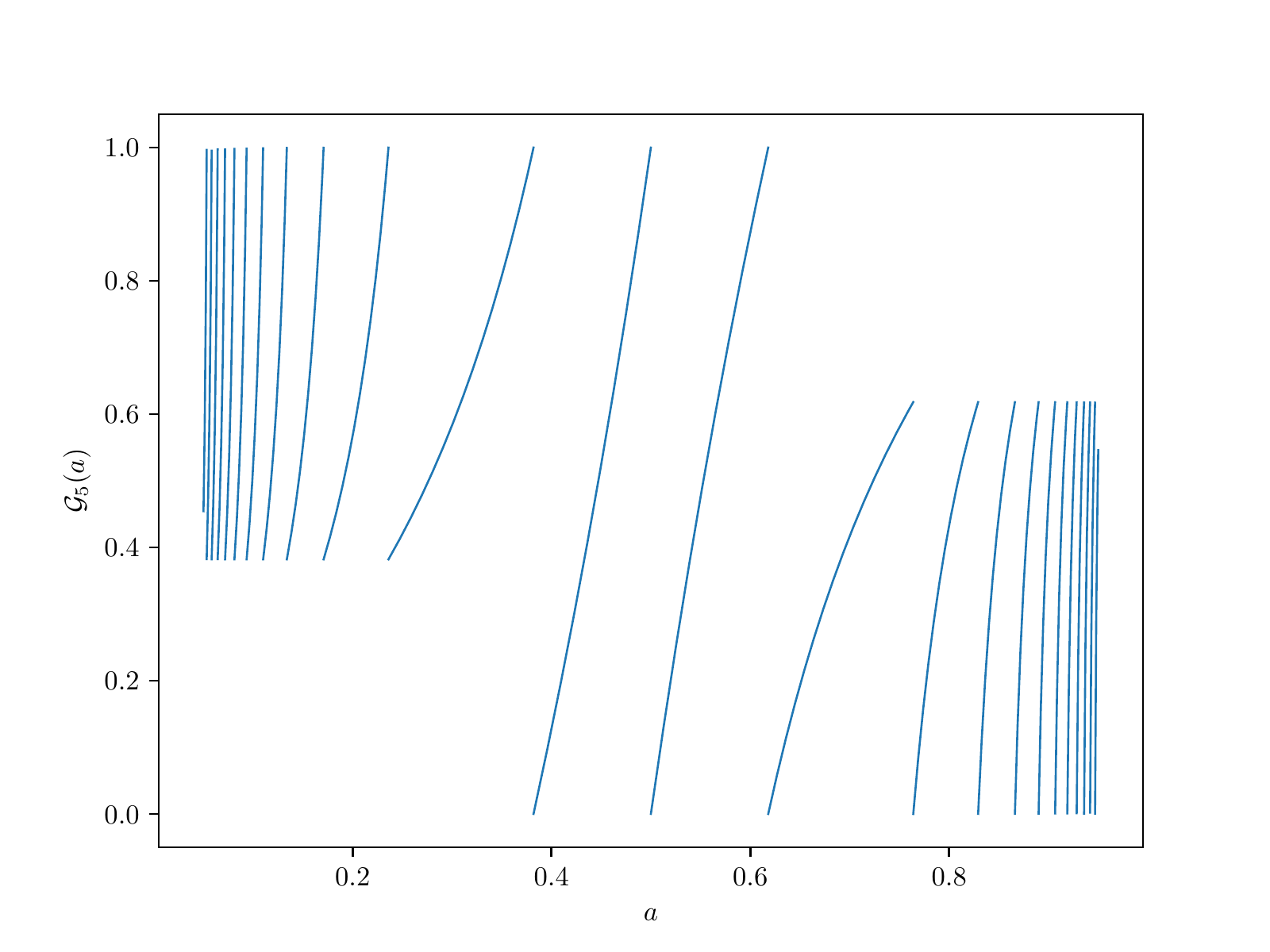}
\caption{The symmetric $G_5$-Gauss interval map $\mathscr{G}_5$.}
\end{figure}

\begin{theorem}
\label{theorem: natural extension and invariant measure for Gauss map}
Fix the notation as in \cref{remark: resolving ambiguity} and \cref{theorem: natural extension and invariant measure for Farey map}.
Define the maps $n_0^q, n_{q-2}^q : (0, 1] \to \NN$ for any $a \in (0, 1]$
by
\begin{equation}
n_i^q(a) = \min\{n \in \NN \mid \mathscr{F}_q^n(a) \not\in I_i^q\}
\end{equation}
when $i = 0, q - 2$, and consider the set $\mathsf{R}^q = \mathsf{S} \setminus \left((\mathsf{H}_0^q \cap \mathsf{V}_{q-2}^q) \cup (\mathsf{H}_{q-2}^q \cap \mathsf{V}_0^q)\right)$. The map $\widetilde{\mathscr{G}}_q : \mathsf{R}^q \to \mathsf{R}^q$ given for any $(a, s) \in \mathsf{R}^q$ by
\begin{equation}
\widetilde{\mathscr{G}}_q(a, s) :=
\begin{cases}
\widetilde{\mathscr{F}}_q^{n_0^q(a)}(a, s), & a \in I_0^q = (\lambda_q/(\lambda_q + 1), 1] \\
\widetilde{\mathscr{F}}_q(a, s), & a \notin I_0^q \cup I_{q-2}^q \\
\widetilde{\mathscr{F}}_q^{n_{q-2}^q(a)}(a, s), & a \in I_{q-2}^q = (0, 1/(\lambda_q+1])
\end{cases}
\end{equation}
is a.e. bijective, and preserves the Lebesgue measure $d\mu_{\widetilde{\mathscr{G}}_q} := dads$ on $\mathsf{R}^q$. Consequently, $\widetilde{\mathscr{G}}_q$ is a model of the natural extension of the map $\mathscr{G}_q : (0, 1] \to (0, 1]$ defined for every $a \in (0, 1]$ by
\begin{equation}
\mathscr{G}_q(a) :=
\begin{cases}
\mathscr{F}_q^{n_0^q(a)}(a), & a \in I_0^q \\
\mathscr{F}_q(a), & a \notin I_0^q \cup I_{q-2}^q \\
\mathscr{F}_q^{n_{q-2}^q(a)}(a), & a \in I_{q-1}^q
\end{cases}.
\end{equation}
Moreover, the map $\mathscr{G}_q$ preserves the finite measure
\begin{equation}
d\mu_{\mathscr{G}_q} :=
\begin{cases}
\left(R_{q, q - 2}^q(a, 1 - a) - R_{q, 0}^q(a, 1 - a)\right)da = \frac{\lambda_q\, da}{a(a + \lambda_q(1 - a))}, & a \in I_0^q = (\lambda_q/(\lambda_q+1), 1] \\
\frac{da}{a(1-a)}, & a \not\in I_0^q \cup I_{q-2}^q \\
\left(R_{q, q-1}^q(a, 1 - a) - R_{q, 1}^q(a, 1 - a)\right)da = \frac{\lambda_q\,da}{(1-a)((1-a) + \lambda_q a)}, & a \in I_{q-2}^q = (0, 1/(\lambda_q+1)]
\end{cases}
\end{equation}
on $(0, 1]$.
\end{theorem}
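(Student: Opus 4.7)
The plan is to identify $\widetilde{\mathscr{G}}_q$ as the Kakutani first-return map of $\widetilde{\mathscr{F}}_q$ to the subset $\mathsf{R}^q \subset \mathsf{S}$, and then read off measure preservation, a.e.\ bijectivity, and the natural-extension property from the corresponding statements for $\widetilde{\mathscr{F}}_q$ in \cref{theorem: natural extension and invariant measure for Farey map}. The invariant density for $\mathscr{G}_q$ itself is then obtained by integrating $da\,ds$ along $s$-fibers of $\mathsf{R}^q$, using the explicit values of the roof function $R_q$ from \cref{theorem: G_q BCZ maps}.

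First I would verify the first-return interpretation by tracking the combinatorics of an orbit of $\widetilde{\mathscr{F}}_q$ starting at $(a,s) \in \mathsf{R}^q$. The Markov identity $\widetilde{\mathscr{F}}_q(\mathsf{V}_i^q) = \mathsf{H}_{q-2-i}^q$ specialises to $\widetilde{\mathscr{F}}_q(\mathsf{V}_0^q) = \mathsf{H}_{q-2}^q$. Thus if $a \in I_0^q$, i.e., $(a,s) \in \mathsf{V}_0^q$, one iteration lands in $\mathsf{H}_{q-2}^q$; the image lies in the deleted piece $\mathsf{H}_{q-2}^q \cap \mathsf{V}_0^q$ precisely when the new $a$-coordinate, which is $\mathscr{F}_q(a)$, still belongs to $I_0^q$. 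Since the $a$-coordinate evolves autonomously under $\widetilde{\mathscr{F}}_q$ (it is always $\mathscr{F}_q$ of the previous $a$), the return time depends only on $a$ and equals $n_0^q(a)$ by definition. The case $a \in I_{q-2}^q$ is treated symmetrically using $\widetilde{\mathscr{F}}_q(\mathsf{V}_{q-2}^q) = \mathsf{H}_0^q$ and the deleted piece $\mathsf{H}_0^q \cap \mathsf{V}_{q-2}^q$, while for $a \notin I_0^q \cup I_{q-2}^q$ a single iterate already lands in some $\mathsf{H}_{q-2-i}^q$ with $i \neq 0, q-2$, which meets neither deleted piece. Almost-everywhere finiteness of $n_0^q$ and $n_{q-2}^q$ follows from Poincar\'e recurrence together with the fact that $\mathscr{F}_q$ maps each of $I_0^q$ and $I_{q-2}^q$ onto $(0,1]$, so the set of $a$ that never escape these intervals has measure zero.

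Granting the first-return description, a.e.\ bijectivity of $\widetilde{\mathscr{G}}_q$ on $\mathsf{R}^q$ and preservation of $d\mu_{\widetilde{\mathscr{G}}_q} = da\,ds$ are immediate from Kakutani's induced-transformation theorem applied to the bijective, Lebesgue-preserving $\widetilde{\mathscr{F}}_q$. Because $\widetilde{\mathscr{F}}_q$ is a model of the natural extension of $\mathscr{F}_q$ via the projection $(a,s) \mapsto a$, its induced map on $\mathsf{R}^q$ automatically models the natural extension of the map induced by $\mathscr{F}_q$ on the projection of $\mathsf{R}^q$; by construction the latter is exactly $\mathscr{G}_q$ from the statement.

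Finally, the invariant density for $\mathscr{G}_q$ is obtained by integrating $da\,ds$ along the $s$-fibers of $\mathsf{R}^q$. For $a \notin I_0^q \cup I_{q-2}^q$ the fiber is the full interval $[0, 1/(a(1-a)))$, yielding $da/(a(1-a))$. For $a \in I_0^q$, only the strip $\mathsf{H}_{q-2}^q$, namely $s \in [R_{q,q-2}(a,1-a), R_{q,q-1}(a,1-a)) = [R_{q,q-2}(a,1-a), 1/(a(1-a)))$, is removed, so the fiber length is $R_{q,q-2}(a,1-a) - R_{q,0}(a,1-a)$; the explicit values $\mathfrak{w}_{q-2}^q = (1,\lambda_q)^T$ and $y_{q-2}^q = \lambda_q$ give $R_{q,q-2}(a,1-a) = \lambda_q/(a(a + \lambda_q(1-a)))$, matching the stated density. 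The case $a \in I_{q-2}^q$ is analogous, with $\mathsf{H}_0^q$ removed and $R_{q,1}(a,1-a) = 1/(a(\lambda_q a + (1-a)))$ yielding the symmetric expression. The main obstacle is the combinatorial bookkeeping to identify the correct deleted strip in each branch and to check that the first-return time really matches the stated power of $\widetilde{\mathscr{F}}_q$; once this is pinned down, Kakutani's theorem together with the explicit roof-function formulas finishes the proof.
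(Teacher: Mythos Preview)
Your approach is correct and takes a genuinely different, more conceptual route than the paper. The paper proves a.e.\ bijectivity and measure preservation of $\widetilde{\mathscr{G}}_q$ by an explicit geometric tiling argument: it first computes $\frac{d\mathscr{F}_q}{da}$ to locate the two indifferent fixed points, then partitions $\mathsf{V}_0^q\setminus\mathsf{H}_{q-2}^q$ and $\mathsf{V}_{q-2}^q\setminus\mathsf{H}_0^q$ into countably many pieces $\{\mathsf{V}_{0,n}^q\}$, $\{\mathsf{V}_{q-2,n}^q\}$ according to the value of $n_0^q$, $n_{q-2}^q$, introduces a nested family of curves $\mathcal{U}_n^q$ to track successive images, and checks by hand that $\widetilde{\mathscr{G}}_q$ carries these pieces bijectively onto horizontal slices that exhaust $\mathsf{H}_{q-2}^q\setminus\mathsf{V}_0^q$ and $\mathsf{H}_0^q\setminus\mathsf{V}_{q-2}^q$. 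You replace this bookkeeping with the single observation that $\widetilde{\mathscr{G}}_q$ is the first-return map of $\widetilde{\mathscr{F}}_q$ to $\mathsf{R}^q$, after which Kakutani's induced-transformation theorem delivers both bijectivity and invariance of $da\,ds$ at once. Your argument is shorter and more structural; the paper's has the advantage of making the image decomposition of $\mathsf{R}^q$ visible, which is what the figures illustrate.

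One small point: your appeal to Poincar\'e recurrence is not quite legitimate here, since $(\mathsf{S},da\,ds,\widetilde{\mathscr{F}}_q)$ is an infinite-measure system. The argument that actually carries the weight is the one you state alongside it: the branch of $\mathscr{F}_q$ on $I_0^q$ (resp.\ $I_{q-2}^q$) is a M\"obius bijection onto $(0,1]$ with a single parabolic fixed point at $a=1$ (resp.\ $a=0$), so the set of $a$ that never escape is a single point. With that in place your first-return description is complete, and the derivation of $d\mu_{\mathscr{G}_q}$ by integrating over the $s$-fibers of $\mathsf{R}^q$ matches the paper's final step exactly.
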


\begin{proof}
We first show that the map $\mathscr{F}_q : (0, 1] \to (0, 1]$ is intermittent, with exactly two indifferent fixed points at $a \to 0+0, 1-0$ by computing the derivative $\frac{d\mathscr{F}_q}{da}$. For any $i \in [q - 1]$, and any $a \in I_i^q = \left(\frac{x_{i+1}^q}{x_{i+1}^q + y_{i+1}^q}, \frac{x_i^q}{x_i^q + y_i^q}\right]$, we write $\varsigma_i = x_i^q + y_i^q$, and $\varsigma_{i+1}^q = x_{i+1}^q + y_{i+1}^q$, which gives
\begin{eqnarray*}
\frac{d\mathscr{F}_q}{da}(a) &=& \frac{\varsigma_{i+1}^q \left((\varsigma_{i+1}^q - \varsigma_i^q)a + (x_i^q - x_{i+1}^q)\right) - (\varsigma_{i+1}^q - \varsigma_i^q)\left(\varsigma_{i+1}^q a - x_{i+q}^q\right)}{\left((x_{i+1}^q - y_i^q)a + (x_i^q - x_{i+1}^q)\right)^2} \\
 &=& \frac{x_i^q \varsigma_{i+1}^q - \varsigma_i^q x_{i+1}^q}{\left((x_{i+1}^q - y_i^q)a + (x_i^q - x_{i+1}^q)\right)^2} \\
 &=& \frac{1}{\left((x_{i+1}^q - y_i^q)a + (x_i^q - x_{i+1}^q)\right)^2}
\end{eqnarray*}
where we use the fact that $\mathfrak{w}_i^q \wedge \mathfrak{w}_{i+1}^q = x_i^q y_{i+1}^q - x_{i+1}^q y_i^q = 1$. Similarly, we get
\begin{equation*}
\lim_{a \to \frac{x_{i+1}^q}{x_{i+1}^q + y_{i+1}^q}+0} \frac{d\mathscr{F}_q}{da}(a) = (x_{i+1}^q + y_{i+1}^q)^2,
\end{equation*}
and
\begin{equation*}
\lim_{a \to \frac{x_i^q}{x_i^q + y_i^q}-0} \frac{d\mathscr{F}_q}{da}(a) = (x_i^q + y_i^q)^2
\end{equation*}
at the end points of $I_i^q$. Since $(x_i + y_i^q)^2$ is equal to $1$ for $i = 0, q - 1$, and is strictly greater than $1$ for $i = 1, 2, \cdots, q - 2$, we have that $\mathscr{F}_q$ is expanding everywhere except at $a \to 0+0, 1-0$. It is immediately true that $\lim_{a \to 0+0} \mathscr{F}_q(a) = 0$ and $\lim_{a \to 1-0} \mathscr{F}_q(a) = 1$, proving that $0, 1$ are the only indifferent fixed points of $\mathscr{F}_q$.

We now accelerate the branches of $\mathscr{F}_q$ that correspond to $0$ and $1$. In what follows, denote the branches of $\mathscr{F}_q$ over $I_0^q$ and $I_{q-2}^q$ by $\mathscr{F}_{q,0}$ and $\mathscr{F}_{q,q-2}$. It can be easily seen that $n_0^q(a) = 1$ iff $a \in I_{0, 0}^q := \left(\frac{\lambda_q}{\lambda_q + 1}, \mathscr{F}_{q, 0}^{-1}\left(\frac{\lambda_q}{\lambda_q + 1}\right)\right]$, and by induction, we get for $n \geq 1$ that $n_0^q(a) = n$ iff $a \in I_{0, n}^q := \left(\mathscr{F}_{q, 0}^{-n+1}\left(\frac{\lambda_q}{\lambda_q + 1}\right), \mathscr{F}_{q, 0}^{-n}\left(\frac{\lambda_q}{\lambda_q + 1}\right)\right]$. Similarly, we for $n \geq 1$ have that $n_{q-2}^q(a) = n$ iff $a \in I_{q - 2, n}^q := \left(\mathscr{F}_q^{-n}\left(\frac{1}{\lambda_q + 1}\right), \mathscr{F}_{q, 0}^{-n+1}\left(\frac{1}{\lambda_q + 1}\right)\right]$. For $n \ge 1$, we define
\begin{equation*}
\mathsf{V}_{0,n}^q := \left\{(a, s) \in V_0^q, s < R_{q, q-2}(a, 1 - a)\right\},
\end{equation*}
and
\begin{equation*}
\mathsf{V}_{q-2,n}^q := \left\{(a, s) \in \mathsf{V}_{q-2}^q, s \geq R_{q, 1}(a, 1 - a)\right\}.
\end{equation*}
Note that $\{\mathsf{V}_{0,n}^q\}_{n=1}^\infty$ is a partition of $\mathsf{V}_0^q \setminus \mathsf{H}_{q-2}^q$, and that $\{\mathsf{V}_{q-2,n}^q\}_{n=1}^\infty$ is a partition of $\mathsf{V}_{q-2}^q \setminus \mathsf{H}_0^q$, and that $\mathsf{R}^q = \left(\cup_{n=1}^\infty \mathsf{V}_{0,n}^q\right) \cup \left(\cup_{i=1}^{q-3} \mathsf{V}_i^q\right) \cup \left(\cup_{n=1}^\infty \mathsf{V}_{q-2,n}^q\right)$. It is clear that the map $\widetilde{\mathscr{G}}_q$ when restricted to $\cup_{i=1}^{q-3} \mathsf{V}_i^q$ is equal to $\widetilde{\mathscr{F}}_q$, and that it sends $\cup_{i=1}^{q-3} \mathsf{V}_i^q$ bijectively to $\cup_{i=1}^{q-3} \mathsf{H}_i^q$. It thus remains to show that $\widetilde{\mathscr{G}}_q$ maps $\left(\cup_{n=1}^\infty \mathsf{V}_{0,n}^q\right) \cup \left(\cup_{n=1}^\infty \mathsf{V}_{q-2,n}^q\right)$ bijectively to $\left(\mathsf{H}_{q-2}^q \setminus \mathsf{V}_0^q\right) \cup \left(\mathsf{H}_0^q \setminus \mathsf{V}_{q-2}^q\right)$.

We prove that $\widetilde{\mathscr{G}}_q$ bijectively maps $\cup_{n=1} V_{q-2,n}^q$ to $\mathsf{H}_{0}^q \setminus \mathsf{V}_{q-2}^q$, and omit the (similar) proof that $\cup_{n=1}^\infty \mathsf{V}_{0,n}^q$ is bijectively mapped to $\mathsf{H}_{q-2}^q \setminus \mathsf{V}_0^q$. Towards this goal, we consider the curve
\begin{equation*}
\mathcal{U} := \left\{(a, s) \in \RR^2 \mid a \in (0, 1), s = \frac{1}{a(1-a)}\right\}.
\end{equation*}
Note that $\mathcal{U}$ is the upper boundary of $\mathsf{H}_{q-2}^q$ (and necessarily $\mathsf{S}$) in the plane. We define a sequence of curves $\left(\mathcal{U}_n^q\right)_{n=1}^\infty$ as follows: $\mathcal{U}_1^q : = \mathcal{U}$, and for any $n \geq 1$, $\mathcal{U}_{n+1}^q := \widetilde{\mathscr{F}}_q\left(\mathcal{U}_n^q \cap \{(a, s) \in \RR^2 \mid a \in I_{q-2}^q\}\right)$. That is, for every $n \geq 1$, we map the portion of $\mathcal{U}_n^q$ that lies in the $\mathsf{V}_{q-2}^q$ branch by $\widetilde{\mathscr{F}}_q$. By virtue of the definition of $\widetilde{\mathscr{F}}_q$ when it maps $\mathsf{V}_{q-2}^q$ to $\mathsf{H}_0^q$, we have that $\mathcal{U}_2^q$ agrees with the upper boundary of $\mathsf{H}_0^q$ in the plane, and that for every $n \geq 1$, the curve $\mathcal{U}_{n+1}^q$ lies \emph{below} $\mathcal{U}_n^q$ inside $\mathsf{H}_0^q$. For convenience, we write for any $(a, s) \in \mathsf{S}$ and $n \geq 1$ that $(a, s) \in \left[\mathcal{U}_{n+1}^q, \mathcal{U}_n^q\right)$ to indicate that $(a, s)$ lies between the curves $\mathcal{U}_{n+1}^q$ (inclusive), and $\mathcal{U}_n^q$ (exclusive). For any $n \geq 1$, we get from the definitions of $\mathsf{V}_{q-2, n}^q$, $I_{q-2, n}^q$, $\mathcal{U}_n^q$ and $\mathcal{U}_{n+1}^q$ that
\begin{equation*}
\widetilde{\mathscr{F}}_q^k \left(\mathsf{V}_{q-2, n}^q\right) = \left\{(a, s) \in \mathsf{S} \mid a \in I_{q-2, n - k}^q, (a, s) \in \left[\mathcal{U}_{k+2}^q, \mathcal{U}_{k+1}^q\right)\right\}
\end{equation*}
for $k = 0, 1, \cdots, n - 1$, and that
\begin{equation*}
\widetilde{\mathscr{F}}_q^n\left(\mathsf{V}_{q-2,n}^q\right) = \left\{(a, s) \in \mathsf{S} \mid a \in \cup_{i=0}^{q-3} I_i^q, (a, s) \in \left[\mathcal{U}_{n+2}^q, \mathcal{U}_{n+1}^q\right)\right\}.
\end{equation*}
That is, for each $n \geq 1$, $\widetilde{\mathscr{G}}_q$ bijectively map $\mathsf{V}_{q-2,n}^q$ to $\widetilde{\mathscr{F}}_q^n\left(\mathsf{V}_{q-2, n}^q\right) \subset \mathsf{H}_0^q \setminus \mathsf{V}_{q-2}^q$, with the left and right boundaries of $\widetilde{\mathscr{F}}_q^n\left(\mathsf{V}_{q-2, n}^q\right)$ agreeing with those of $\mathsf{H}_0^q \setminus \mathsf{V}_{q-2}^q$, and the bottom boundary of $\widetilde{\mathscr{F}}_q^n\left(\mathsf{V}_{q-2, n}^q\right)$ agreeing with the top boundary of $\widetilde{\mathscr{F}}_q^{n+1}\left(\mathsf{V}_{q-2, n+1}^q\right)$. The function $\widetilde{\mathscr{G}}_q$ is a composition of $\widetilde{\mathscr{F}}_q$, and so the mapping in question preserves the measure $d\mu_{\widetilde{F}_q}$.

Finally, integrating the measure $d_{\widetilde{\mathscr{G}}_q}$ over the $s$ fibers gives the measure $d_{\mathscr{G}_q}$ in the statement of the theorem.
\end{proof}

\begin{figure}
\centering
\includegraphics[width=.8\textwidth]{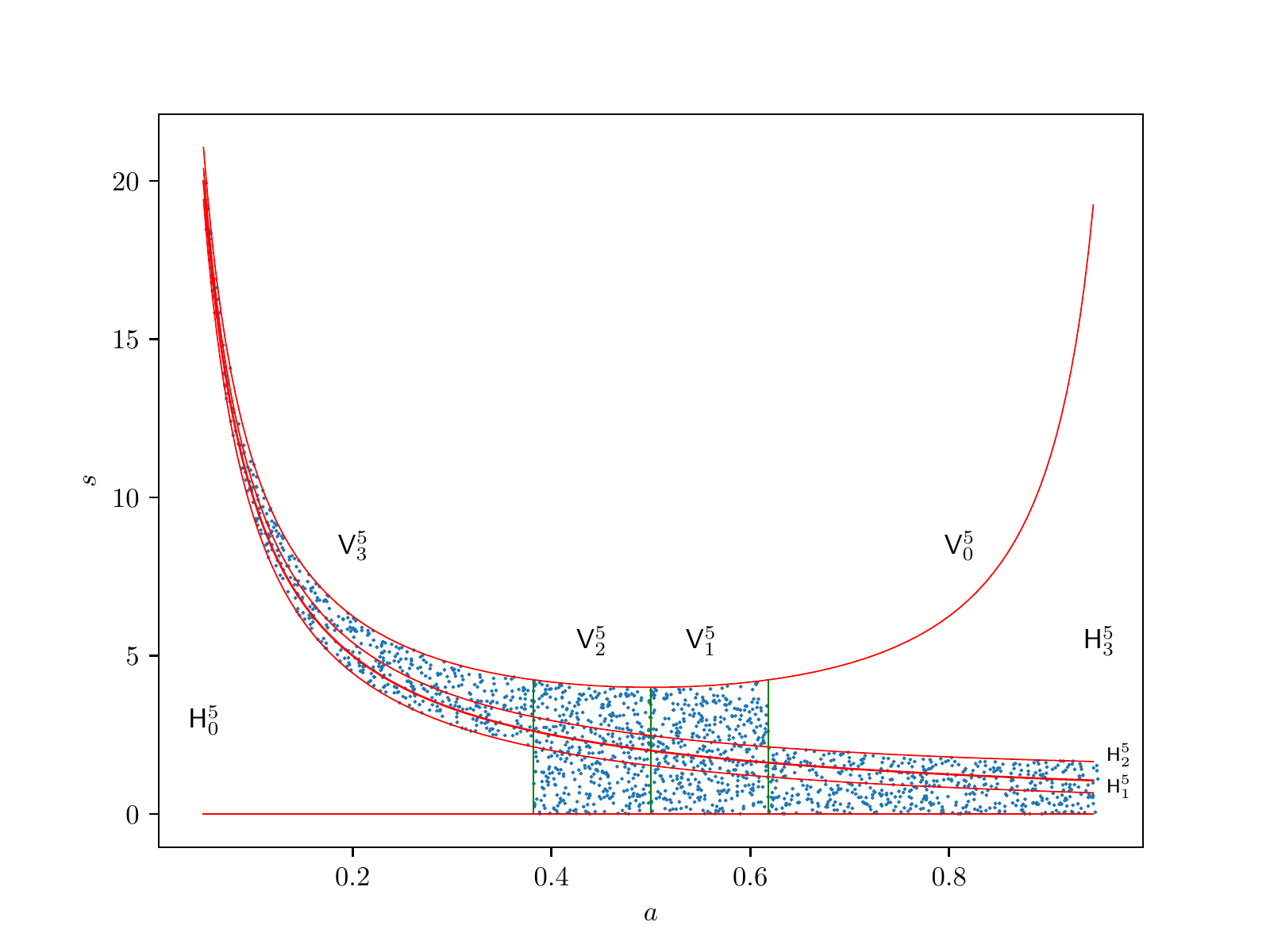}
\caption{An orbit of the natural extension $\widetilde{\mathscr{G}}_5$ with the regions $\{\mathsf{V}_i^3\}_{i=0}^3$ and $\{\mathsf{H}_i^3\}_{i=0}^3$ from \cref{proposition: side identification} indicated.}
\bigbreak
\includegraphics[width=.8\textwidth]{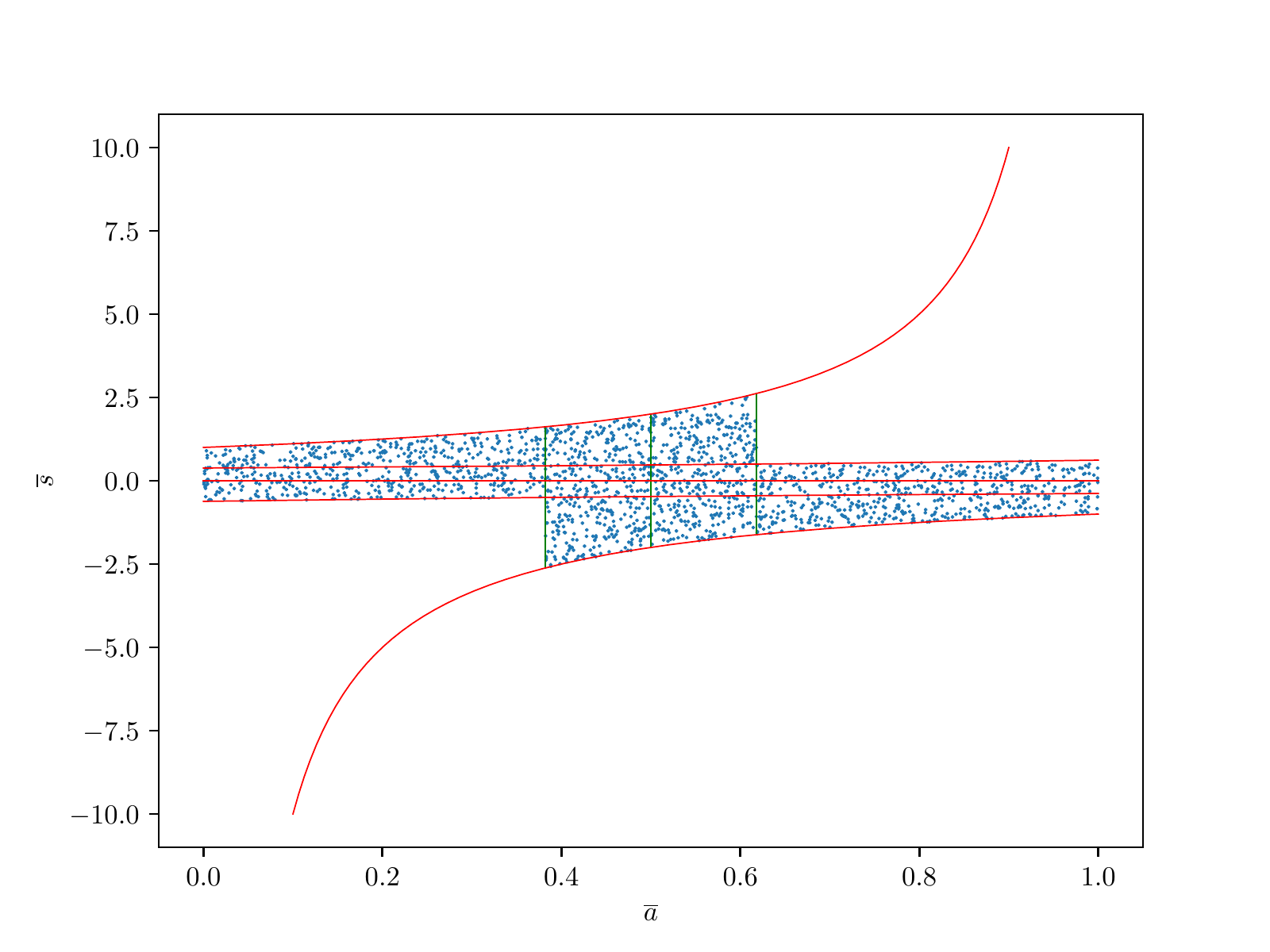}
\caption{An orbit of the natural extension $\widetilde{\mathscr{G}}_5$ conjugated by $(a, s) \mapsto (\overline{a}, \overline{s}) := (a, s - 1/a)$.}
\end{figure}

\clearpage

\end{document}